\newtheorem{theorem}{Theorem}[section]
\newtheorem{corollary}[theorem]{Corollary}
\theoremstyle{definition}
\newtheorem{definition}[theorem]{Definition}
\theoremstyle{remark}
\newtheorem{conj}[theorem]{Conjecture}
\numberwithin{equation}{section}
\begin{document}
\title[Arithmetic properties of certain $t$-regular partitions]{Arithmetic properties of certain $t$-regular partitions}
	
\author{Rupam Barman}
\address{Department of Mathematics, Indian Institute of Technology Guwahati, Assam, India, PIN- 781039}
\email{rupam@iitg.ac.in}
	
\author{Ajit Singh}
\address{Department of Mathematics, Indian Institute of Technology Guwahati, Assam, India, PIN- 781039}
\email{ajit18@iitg.ac.in}
	
\author{Gurinder Singh}
\address{Department of Mathematics, Indian Institute of Technology Guwahati, Assam, India, PIN- 781039}
\email{gurinder.singh@iitg.ac.in}
	
\date{September 3, 2022}


\subjclass{Primary 05A17, 11P83, 11F11}

\keywords{$t$-regular partitions; Eta-quotients; modular forms; Congruences; Density}

\dedicatory{}

\begin{abstract} For a positive integer $t\geq 2$, let $b_{t}(n)$ denote the number of $t$-regular partitions of a nonnegative integer $n$. 
Motivated by some recent conjectures of Keith and Zanello, we establish infinite families of congruences modulo $2$ for $b_9(n)$ and $b_{19}(n)$. 
We prove some specific cases of two conjectures of Keith and Zanello on self-similarities of $b_9(n)$ and $b_{19}(n)$ modulo $2$. 
We also relate $b_{t}(n)$ to the ordinary partition function, and prove that $b_{t}(n)$ satisfies the Ramanujan's famous congruences for some infinite families of $t$. 
For $t\in \{6,10,14,15,18,20,22,26,27,28\}$, Keith and Zanello conjectured that there are no integers $A>0$ and $B\geq 0$ for which $b_t(An+ B)\equiv 0\pmod 2$ for all $n\geq 0$. 
We prove that, for any $t\geq 2$ and prime $\ell$, there are infinitely many arithmetic progressions $An+B$ for which $\sum_{n=0}^{\infty}b_t(An+B)q^n\not\equiv0 \pmod{\ell}$. 
Next, we obtain quantitative estimates for the distributions of $b_{6}(n), b_{10}(n)$ and $b_{14}(n)$ modulo 2. 
We further study the odd densities of certain infinite families of eta-quotients related to the 7-regular and $13$-regular partition functions.
\end{abstract}

\maketitle
\section{Introduction and statement of results} 
 A partition of a positive integer $n$ is a finite sequence of non-increasing positive integers $(\lambda_1, \lambda_2, \ldots, \lambda_k)$ such that $\lambda_1+\lambda_2+\cdots +\lambda_k=n$. 
 Let $p(n)$ denote the number of partitions of $n$. If $n=0$, then $p(n)$ is defined to be 1. Let $t\geq 2$ be a fixed positive integer. 
 A $t$-regular partition of a positive integer $n$ is a partition of $n$ such that none of its parts is divisible by $t$. 
 For example, $(6, 4, 3, 2)$ is a 5-regular partition of 15 as none of its parts is divisible by 5. Let $b_{t}(n)$ be the number of $t$-regular partitions of $n$. The generating function for $b_{t}(n)$ is given by 
\begin{align}\label{gen_fun}
\sum_{n=0}^{\infty}b_{t}(n)q^n=\frac{f_{t}}{f_1},
\end{align}
where $f_k:=(q^k; q^k)_{\infty}=\prod_{j=1}^{\infty}(1-q^{jk})$ and $k$ is a positive integer. 
 \par 
 In literature, many authors have studied divisibility and distribution properties of $b_{t}(n)$ for certain values of $t$, see for example \cite{baruahdas, calkin, cui_Gu_2013, cui_Gu_2015, gordon1997, hirschhorn_sellers, Keith2014,Keith2021,xia_yao,zhao_jin_yao}. 
 In a recent paper \cite{Keith2021}, Keith and Zanello studied $t$-regular partitions for certain values of $t\leq 28$, and made several conjectures on $b_t(n)$. 
 In \cite{SB1, SB2}, the first and the second author have proved two conjectures of Keith and Zanello on $b_3(n)$ and $b_{25}(n)$ respectively, and certain specific cases related to their conjectures. 
 Keith and Zanello proved various congruences for $b_9(n)$ and made the following conjecture regarding the self-similarity of $b_9(n)$.
\begin{conj}\cite[Conjecture 9]{Keith2021}\label{conj1}
For all primes $p\equiv \pm 1\pmod 9$, let $\alpha\equiv-3^{-1}\pmod{2p}$, $0<\alpha<2p$, and $\beta= \lfloor \frac{2p}{3}\rfloor$. Then
\begin{align}\label{new-eq-2}
\sum_{n=0}^{\infty}b_{9}(2(pn+\alpha))q^n\equiv q^{\beta}\sum_{n=0}^{\infty}b_{9}(2n+1)q^{pn}\pmod 2.
\end{align}
\end{conj}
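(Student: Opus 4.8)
The plan is to work modulo $2$, reduce the generating function to an eta-quotient, and recognize \eqref{new-eq-2} as a Hecke eigen-relation in half-integral weight. Since $f_1^2\equiv f_2\pmod2$, one has $1/f_1\equiv f_1/f_2\pmod2$, so \eqref{gen_fun} gives
\[
\sum_{n=0}^{\infty}b_9(n)q^n\equiv\frac{f_1 f_9}{f_2}\pmod 2.
\]
As $f_2$ contains only even powers of $q$, the $2$-dissection of the right-hand side is controlled by that of $f_1 f_9$, which I would extract from Euler's pentagonal identity together with $24g_k+1=(6k-1)^2$. This yields explicit eta-quotients $\Phi_0,\Phi_1$ with $\sum_n b_9(2n)q^n\equiv\Phi_0$ and $\sum_n b_9(2n+1)q^n\equiv\Phi_1\pmod2$; up to the dilation $q\mapsto q^p$ and the shift by $q^\beta$, the series $\Phi_1$ is exactly the right-hand side of \eqref{new-eq-2}.

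Next I would reformulate the target. Reading off coefficients, \eqref{new-eq-2} is equivalent to the pair of assertions $b_9(2(pn+\alpha))\equiv0\pmod2$ for $n\not\equiv\beta\pmod p$ and $b_9\big(2p^2m+2p\beta+2\alpha\big)\equiv b_9(2m+1)\pmod2$ for all $m\ge0$. The second relation links the coefficient of $b_9$ at index $2p^2m+2p\beta+2\alpha$ to the one at index $2m+1$, which is the signature of the half-integral-weight Hecke operator $T_{p^2}$ acting on the weight-$\tfrac12$ eta-quotient $\eta(\tau)\eta(9\tau)/\eta(2\tau)=q^{1/3}f_1f_9/f_2$; the passage between the even part $\Phi_0$ and the odd part $\Phi_1$ is the cross term produced by $T_{p^2}$, and the dilation $q\mapsto q^p$ is the accompanying $V_p$.

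The hypothesis $p\equiv\pm1\pmod9$ is what forces this operator to act as the identity, uniformly in $p$. By the Serre--Stark description of weight $\tfrac12$, the aim is to show that $\Phi_1$ reduces modulo $2$ to a combination of unary theta series $\sum_k\psi(k)q^{tk^2}$ attached to the (even) cubic character $\psi$ of conductor $9$. Such thetas are $T_{p^2}$-eigenforms with eigenvalue governed by $\psi(p)$; since the kernel of $\psi$ is $\{\pm1\}\subset(\mathbb{Z}/9)^{\times}$, one has $\psi(p)=1$ precisely when $p\equiv\pm1\pmod9$, so $T_{p^2}$ then acts trivially and returns the self-map \eqref{new-eq-2}. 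Completing the square $24n+c=\square$ attached to this conductor-$9$ form is what produces the precise residue $\alpha\equiv-3^{-1}\pmod{2p}$ and the shift $\beta=\lfloor2p/3\rfloor$ aligning the lowest-order terms.

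The main obstacle is establishing this eigenform property modulo $2$ uniformly in $p$, rather than one prime at a time: a Sturm-bound verification only settles \eqref{new-eq-2} for a single fixed admissible $p$, whereas the conjecture asserts it for all of the infinitely many primes $p\equiv\pm1\pmod9$. The decisive step is therefore to prove that $\Phi_1$ genuinely has the pure cubic-character theta decomposition modulo $2$ --- equivalently, that the representation function of the associated conductor-$9$ quadratic form is multiplicative at all these primes simultaneously --- after which the action of $T_{p^2}$ through $\psi(p)=1$ closes the argument for every such $p$ at once.
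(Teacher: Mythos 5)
You are attempting to prove Conjecture \ref{conj1}, which the paper itself does \emph{not} prove: it is stated as an open conjecture of Keith and Zanello, and the paper's actual contribution here (Theorem \ref{thm1}) is only to verify the four additional cases $p=53,71,73,89$. That verification is a finite computation: starting from the Xia--Yao congruence $\sum_{n\geq 0}b_9(2n+1)q^n\equiv f_1f_9^2/f_3\pmod 2$ (the paper's \eqref{thmnew}), both sides of \eqref{new-eq-2} are multiplied by auxiliary eta-factors so as to become holomorphic modular forms of weight $3$ (resp.\ weight $5$) on $\Gamma_0(27p)$, the left-hand side is realized as $F_{p,1}|T_p$ (resp.\ a $T_p$-image of $G_{p,1}$), and the congruence is checked coefficient-by-coefficient up to the Sturm bound $9(p+1)$ (resp.\ $15(p+1)$) in Sage. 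As you correctly observe, this method is intrinsically one prime at a time and cannot settle the full conjecture.

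Your proposal aims at a uniform proof for all $p\equiv\pm1\pmod 9$, but it has a genuine gap, which you yourself flag in your final paragraph: the ``decisive step,'' namely that the odd part $\Phi_1\equiv f_1f_9^2/f_3\pmod 2$ is congruent modulo $2$ to a combination of unary theta series attached to the cubic character of conductor $9$, is asserted rather than proved, and it is essentially equivalent to the conjecture itself --- granted such a theta congruence, the Hecke bookkeeping is routine; without it, nothing is proved. Two further points would need repair even as a program. First, Serre--Stark classifies \emph{holomorphic} weight-$1/2$ forms as theta combinations, but $\eta(\tau)\eta(9\tau)/\eta(2\tau)$ is only weakly holomorphic: by Theorem \ref{thm_ono2} it has negative order at the cusps of denominator $2$, so no a priori theta decomposition exists; moreover, after the $2$-dissection the natural object is the weight-\emph{one} eta-quotient $\eta(z)\eta^2(9z)/\eta(3z)$, not a half-integral-weight form, so the half-integral framework is not even the right home for $\Phi_1$. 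Second, the claim that $\psi(p)=1$ forces $T_{p^2}$ to ``act as the identity'' is too quick: the half-integral Hecke relation is a three-term relation whose auxiliary terms carry factors of the shape $1+p^{-1}$, which are even, so the mod-$2$ action on a theta series is not simply multiplication by $\psi(p)$, and the precise self-similar shape of \eqref{new-eq-2} (the residue $\alpha\equiv-3^{-1}\pmod{2p}$, the shift $\beta=\lfloor 2p/3\rfloor$, and the vanishing off the progression $n\equiv\beta\pmod p$) would still have to be extracted from that relation. Your preliminary reductions are fine --- $f_9/f_1\equiv f_1f_9/f_2\pmod 2$ is correct, the dissected eta-quotients $\Phi_0,\Phi_1$ do exist (they are the paper's \eqref{dissectionf9}), and your restatement of \eqref{new-eq-2} as the two coefficient conditions is accurate, as is the identification of $\{p\equiv\pm1\pmod 9\}$ with the kernel of the cubic character --- but the proof is missing at exactly the point where the conjecture lives, and the paper does not supply that step either.
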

In \cite[Theorem 8]{Keith2021}, Keith and Zanello proved some specific cases of Conjecture \ref{conj1} corresponding to $p=17, 19, 37$. In the following theorem, we prove some other specific cases of Conjecture \ref{conj1} corresponding to $p=53, 71, 73, 89$. 
\begin{theorem}\label{thm1}
	If $p\in\left\lbrace  53, 71, 73, 89\right\rbrace$, then
\begin{align*}
\sum_{n=0}^{\infty}b_{9}\left(2pn+\alpha \right)q^{n}\equiv q^{\beta}\sum_{n=0}^{\infty}b_{9}\left(2n+1\right)q^{pn} \pmod2,
\end{align*}
where $\alpha\equiv -3^{-1}\pmod{2p}$, $0<\alpha<2p$, and $\beta=\lfloor \frac{2p}{3}\rfloor$.
\end{theorem}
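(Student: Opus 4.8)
The plan is to recognize, modulo $2$, the odd-index series $\sum_{n\ge 0} b_9(2n+1)q^n$ as a power of $q$ times the reduction of an integer-weight modular form, and then to read the claimed self-similarity as the statement that this form is annihilated modulo $2$ by the Hecke operator $T_p$, a condition verifiable by a finite computation for each listed prime. Concretely, starting from $\sum_n b_9(n)q^n=f_9/f_1$ and using $f_k^2\equiv f_{2k}\pmod 2$, I would rewrite
\[
\frac{f_9}{f_1}\equiv \frac{f_1 f_9}{f_2}\pmod 2 ,
\]
$2$-dissect according to the parity of the exponent, and collect the odd-degree terms to obtain an explicit eta-quotient $G(q)$ with $\sum_{n\ge 0} b_9(2n+1)q^n\equiv G(q)\pmod 2$. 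After the rescaling $q\mapsto q^{24}$ and multiplication by a suitable power $q^{\delta}$, the series $F(z):=q^{\delta}G(q^{24})=\sum_m A(m)q^m$ should be the $q$-expansion of a holomorphic modular form of some integral weight $k$ on $\Gamma_0(N)$ with $N=2^{a}3^{b}$ (possibly with a nontrivial character), and $b_9(2n+1)=A(24n+\delta)$.

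Writing $2pn+\alpha=2(pn+\gamma)+1$ with $\gamma=(\alpha-1)/2$, comparison of $q$-coefficients shows that the identity of Theorem \ref{thm1} is equivalent to the pair of congruences $b_9(2(pn+\gamma)+1)\equiv 0\pmod 2$ for $n\not\equiv\beta\pmod p$, together with $b_9(2(p^2m+p\beta+\gamma)+1)\equiv b_9(2m+1)\pmod 2$. These are exactly the coefficient relations forced by a weight-$k$ Hecke eigenform with $T_p$-eigenvalue $\lambda_p\equiv 0\pmod 2$, since then $A(pm)\equiv 0$ for $p\nmid m$ and $A(p^2m)\equiv p^{k-1}A(m)\equiv A(m)\pmod 2$, using that $p$ is odd. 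The hypothesis $p\equiv\pm1\pmod 9$ is precisely the condition that $p$ is a nonzero cube modulo $9$, which is what should force the governing eigenvalue to be even, and hence is the conceptual reason the self-similarity is expected for all such $p$.

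To establish the four specific cases one does not need the full (conjectural) global eigenform identity, only the congruence $F\mid T_p\equiv 0\pmod 2$. Realizing both sides of the theorem as modular forms modulo $2$ of a common weight $k$ and of level $N'$ a multiple of $Np$ --- the prefactor $q^{\beta}$ and the dilation $q\mapsto q^{p}$ being absorbed by $V_p$ together with a level raise --- Sturm's bound reduces the required congruence to checking finitely many Fourier coefficients, up to $\lfloor \tfrac{k}{12}[\mathrm{SL}_2(\mathbb{Z}):\Gamma_0(N')]\rfloor$. This finite verification is carried out separately for $p=53,71,73,89$, and it is here that the parity of the relevant eigenvalue (equivalently, of the underlying representation numbers) is confirmed. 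I expect the main obstacle to be pinning down the exact weight and level of $F$ and packaging the right-hand side as a modular form of controlled level, so that Sturm's bound applies with a feasible cutoff; the parity computation and the bookkeeping matching $\alpha=2p-3^{-1}$, $\gamma=(\alpha-1)/2$, and $\beta=\lfloor 2p/3\rfloor$ to the support of $F$ are then routine for each prime.
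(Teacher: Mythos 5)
Your proposal is correct and follows essentially the same route as the paper: both identify $\sum_{n\ge 0} b_9(2n+1)q^n$ modulo $2$ with an explicit eta-quotient (the paper cites the Xia--Yao identity $f_1f_9^2/f_3$ rather than re-deriving it), recast the self-similarity as a congruence obtained by applying the Hecke operator $T_p$ to a modular form mod $2$, and reduce each case $p=53,71,73,89$ to a finite coefficient check via Sturm's bound carried out by machine. The only packaging difference is that the paper multiplies by auxiliary eta-factors to obtain holomorphic forms of weight $3$ (for $p\equiv -1\pmod 9$: $53,71,89$) and weight $5$ (for $p\equiv 1\pmod 9$: $73$) at level $27p$, and verifies $F_{p,1}|T_p\equiv F_{p,2}\pmod 2$ there, rather than your equivalent ``annihilation'' formulation $F|T_p\equiv 0\pmod 2$ at a level independent of $p$.
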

Keith and Zanello \cite{Keith2021} also studied $2$-divisibility of $b_{19}(n)$ and proved new congruences for the prime $p=5$ and made the following conjecture regarding the self-similarity of $b_{19}(n)$. 
\begin{conj}\cite[Conjecture 11]{Keith2021}\label{conj2}
For a prime $p>3$, let $\gamma\equiv-3\cdot 8^{-1}\pmod{p}$, $0<\gamma<p$, and $\delta= \lfloor \frac{3p}{8}\rfloor$. Then, for a positive proportion of primes $p$, it holds that:
\begin{align}\label{new-eq-3}
\sum_{n=0}^{\infty}b_{19}(2(pn+\gamma))q^n\equiv q^{\delta}\sum_{n=0}^{\infty}b_{19}(2n)q^{pn}\pmod 2.
\end{align}
\end{conj}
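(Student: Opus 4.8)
The plan is to prove Conjecture~\ref{conj2} by realizing the even part of $b_{19}$ modulo $2$ inside a Hecke eigenform and reading off the self-similarity from the Hecke recursion, valid precisely for those primes whose $p$-th eigenvalue is even. Write $c(n):=b_{19}(2n)\bmod 2$ and $\Phi(q):=\sum_{n\ge 0}c(n)q^n$, so that $b_{19}(2(pn+\gamma))=c(pn+\gamma)$. Equating the two sides of \eqref{new-eq-3} coefficientwise — the left side is $\sum_n c(pn+\gamma)q^n$ and the right side is $\sum_m c(m)q^{pm+\delta}$ — the assertion is equivalent to the single self-similarity relation
\[
c(pn+\gamma)\equiv
\begin{cases}
c\!\left(\dfrac{n-\delta}{p}\right), & n\equiv \delta \pmod p,\\
0, & \text{otherwise,}
\end{cases}
\pmod 2 .
\]

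First I would reduce the generating function modulo $2$ and extract its even part. Starting from $\sum b_{19}(n)q^n=f_{19}/f_1$ and using $f_1^2\equiv f_2\pmod 2$, a $2$-dissection should express $\Phi(q)$ modulo $2$ as an explicit eta-quotient $E(q)$ on a group $\Gamma_0(N)$ with $N$ supported on the primes $2$ and $19$. The occurrence of $8$ and $3$ in the data $\gamma\equiv-3\cdot 8^{-1}\pmod p$ and $\delta=\lfloor 3p/8\rfloor$ indicates the correct normalization: I set $F(q):=q^{3}\Phi(q^{8})=\sum_m a(m)q^m$, so that $F$ is supported on exponents $m\equiv 3\pmod 8$ and $c(n)=a(8n+3)$. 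The key structural step is to identify $F$ modulo $2$ with a Hecke eigenform; given the level-$19$ origin I expect $F$ to reduce to a weight-one CM form attached to $\mathbb{Q}(\sqrt{-19})$, with nebentypus $\chi=\left(\tfrac{-19}{\cdot}\right)$, a claim that can be pinned down by comparing $q$-expansions up to the Sturm bound.

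Granting the eigenform identification, the recursion $a(pm)=a_p\,a(m)-\chi(p)p^{k-1}a(m/p)$ does the rest. For a prime $p$ inert in $\mathbb{Q}(\sqrt{-19})$ one has $a_p=0$ and $\chi(p)=-1$, whence $a(pm)=p^{k-1}a(m/p)\equiv a(m/p)\pmod 2$, with $a(m/p):=0$ when $p\nmid m$. Substituting $m=8n+3$ and using $c(n)=a(8n+3)$ recovers the displayed relation: the congruence $8\gamma+3\equiv 0\pmod p$ is exactly the hypothesis $\gamma\equiv-3\cdot 8^{-1}\pmod p$, and writing $8\gamma+3=pe$ one finds $e=3p-8\lfloor 3p/8\rfloor$, so that the induced shift is precisely $\delta=\lfloor 3p/8\rfloor$; this yields \eqref{new-eq-3}, and the inert primes have density $\tfrac12$ by quadratic reciprocity. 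Even if the reduction of $F$ is not literally a single CM form, one still wins: by the Deligne--Serre theorem $F$ carries a residual representation $\bar\rho_F\colon G_{\mathbb{Q}}\to \mathrm{GL}_2(\overline{\mathbb{F}}_2)$ with $a_p\equiv \operatorname{tr}\bar\rho_F(\mathrm{Frob}_p)\pmod 2$, and since the identity matrix has trace $0$ in characteristic $2$, the set of $p$ with $a_p\equiv 0\pmod 2$ is a union of nonempty Frobenius classes and so has positive density by Chebotarev; for each such $p$ the recursion still gives $a(pm)\equiv a(m/p)\pmod 2$, which already suffices.

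The main obstacle is the first step: producing the exact eta-quotient for the even part and rigorously proving that its normalization $F$ is congruent modulo $2$ to a Hecke eigenform. Mod-$2$ eigenform identifications are delicate, since multiplicity and the choice of the ambient space $S_k(\Gamma_0(N),\chi)$ matter, and the Sturm-bound verification, while finite, must be carried out in the correct space. By contrast, the index bookkeeping that produces $\gamma$ and $\delta$ is routine once $F$ is known, and the positive-density conclusion is essentially automatic from the characteristic-$2$ trace remark. The cleanest route to the conjecture's quantitative "positive proportion" is the CM scenario above, which delivers density at least $\tfrac12$; the residual-representation argument is the fallback that guarantees positivity in general.
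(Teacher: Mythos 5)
Your proposal is a program, not a proof, and its first step --- that $F(q)=q^{3}\sum_{n\ge 0}b_{19}(2n)q^{8n}$ is congruent modulo $2$ to a Hecke eigenform, or even that it is congruent to \emph{any} holomorphic modular form of integral weight on a fixed $\Gamma_0(N)$ with $N$ independent of $p$ --- is precisely the open content of the conjecture, and nothing in your argument establishes it. Modulo $2$ one has $\sum_{n\ge 0} b_{19}(n)q^{n}\equiv f_1f_{19}/f_2$, so the even part has the shape $A(q)/f_1$ with $A$ theta-like, and the division by $f_1$ is the whole obstruction: the quotient $f_{19}/f_1$ fails the hypothesis of Theorem \ref{Cotron}, and the obvious renormalizations (multiplying by $f_1^{2^{k}}/f_2^{2^{k-1}}\equiv 1 \pmod 2$) do not produce an eta-quotient holomorphic at all cusps. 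Since $F$ is not known to lie in any space $M_{k}(\Gamma_0(N),\chi)$, your appeal to ``comparing $q$-expansions up to the Sturm bound'' is vacuous: Theorem \ref{Sturm} certifies a congruence only between two forms already known to lie in specified spaces. The same missing membership undermines your fallback: Deligne--Serre attaches a residual representation to an \emph{eigenform}, and a mod-$2$ form need not be a sum of eigenforms (the mod-$2$ Hecke algebra is not semisimple), so ``$F$ carries a residual representation $\bar\rho_F$'' is unjustified without first placing and decomposing $F$ inside a genuine space --- which is again the unproven first step.

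Moreover, even granting the identification, your CM mechanism conflicts with the known data. The quadratic-form structure behind $f_1f_{19}$ has discriminant $-76$ and class number $3$, and the primes $5$, $7$, $11$ are all split and represented by the non-principal forms $4x^{2}\pm 2xy+5y^{2}$ (indeed $5=4\cdot 0^{2}+2\cdot 0\cdot 1+5\cdot 1^{2}$, $7=4-2+5$, $11=4+2+5$), so all three have odd eigenvalue $a_p\equiv 1\pmod 2$ for the weight-one dihedral form; your criterion is therefore silent at exactly the primes where \eqref{new-eq-3} is known to hold ($p=5$ by Keith--Zanello, $p=11$ by Theorem \ref{thm2}), while $p=7$, lying in the same Frobenius class, fails. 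Hence parity of CM Hecke eigenvalues cannot be the governing criterion, and the ``density at least $1/2$ from inert primes'' conclusion has no support. For comparison, the paper does not prove the conjecture either: Theorem \ref{thm2} verifies the single prime $p=11$ by multiplying by $f_{11}^{63}$ so as to land in $M_{63}(\Gamma_0(2299),\chi_2)$ with $p$ dividing the level, whence $\chi_2(p)=0$ kills the lower term of $T_p$, and the resulting congruence is checked up to the Sturm bound $13860$ --- a per-prime device that cannot be made uniform in $p$. Your index bookkeeping ($8\gamma+3\equiv 0\pmod p$ and $e=3p-8\delta$ giving the shift $\delta=\lfloor 3p/8\rfloor$) is correct, but that was never the hard part.
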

In \cite[Theorem 10]{Keith2021}, Keith and Zanello proved that $p=5$ satisfies \eqref{new-eq-3}. It is easy to check that $p=7$ does not satisfy \eqref{new-eq-3}. In the following theorem, we prove that  $p=11$ satisfies \eqref{new-eq-3}.
\begin{theorem} \label{thm2}
We have
\begin{align*}
\sum_{n=0}^{\infty}b_{19}\left(22n+2 \right)q^{n}\equiv q^{4}\sum_{n=0}^{\infty}b_{19}\left(2n\right)q^{11n} \pmod2
\end{align*}
and therefore, for all $k\not\equiv 4\pmod{11}$
	$$b_{19}(242n+22k+2)\equiv0\pmod{2},$$
	and by iteration,
	\begin{align*}b_{19}\left(2\cdot11^{2d}n+2\cdot 11^{2d-1}k+2\cdot11^{2d-2}+90\left(\frac{11^{2d}-1}{120}\right)\right)\equiv 0\pmod{2},
	\end{align*}
	for all $d,k\geq1$ with $k\not\equiv4\pmod{11}$.
\end{theorem}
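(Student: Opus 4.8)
The plan is to derive the two congruence families as formal consequences of the self-similarity identity
\[
\sum_{n=0}^{\infty}b_{19}(22n+2)\,q^{n}\equiv q^{4}\sum_{n=0}^{\infty}b_{19}(2n)\,q^{11n}\pmod 2,
\]
and to spend the real effort on proving this identity. Granting it, comparison of coefficients is immediate: the right-hand side is supported only on exponents $n\equiv 4\pmod{11}$, so $b_{19}(22n+2)\equiv 0\pmod 2$ for $n\not\equiv 4\pmod{11}$, which upon writing $n=11j+k$ with $k\not\equiv 4$ gives $b_{19}(242j+22k+2)\equiv 0$. On the exceptional residue $n=11m+4$ the coefficients must instead match, yielding the self-map $b_{19}(242m+90)\equiv b_{19}(2m)\pmod 2$. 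Iterating the substitution $m\mapsto 121m+45$ on the vanishing set then pushes the congruence along the whole family, the additive constant of each progression appearing as a geometric sum of the shape $45\cdot\frac{121^{d}-1}{120}$ after $d$ iterations.

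To prove the identity I would first bring the even part $B(q):=\sum_{n\ge 0}b_{19}(2n)\,q^{n}$ into modular form. From \eqref{gen_fun} and $f_{1}^{2}\equiv f_{2}\pmod 2$ one has $\frac{f_{19}}{f_{1}}\equiv\frac{f_{1}f_{19}}{f_{2}}\pmod 2$, and since $1/f_{2}$ is already a series in $q^{2}$, the $2$-dissection of $B(q)$ is governed by the $2$-dissection of $f_{1}f_{19}$. Carrying this out should express $B(q)$ modulo $2$ as an explicit eta-quotient $g(q)$, namely the reduction of a holomorphic modular form of a definite weight on some $\Gamma_{0}(N)$ with character. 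Securing this clean eta-quotient description is the technical heart of the argument.

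With $g$ available, the identity says precisely that the component of $g$ supported on the progression $11n+1$ coincides, modulo $2$, with $q^{4}g(q^{11})$. I would establish this by performing an $11$-dissection of the eta-quotient $g$ and matching its residue-$1$ component against $q^{4}g(q^{11})$. Both sides are, modulo $2$, descriptions of a single modular form of a common weight and level, so the equality can be certified by comparing $q$-expansions up to the relevant Sturm bound, a finite computation for the fixed prime $11$. That only a positive proportion of primes should satisfy the analogous statement in Conjecture \ref{conj2} is explained by $g$ being congruent to a Hecke eigenform with complex multiplication, whose behaviour at $p$ is dictated by the splitting of $p$ in the associated imaginary quadratic field; for $p=11$ the direct check settles matters.

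The step I expect to be the main obstacle is the middle one: carrying out the $2$-dissection of $f_{1}f_{19}$ correctly and certifying that the resulting series is a genuine holomorphic modular form with accurately computed weight and level. Once that eta-quotient representation is in place, the $11$-dissection identity becomes a finite verification, and the two congruence families follow from the coefficient extraction and iteration described above.
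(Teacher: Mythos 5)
Your reduction of the two congruence families to the displayed identity is correct and matches the paper: the right-hand side is supported on $n\equiv 4\pmod{11}$, which gives the vanishing family; matching coefficients on the exceptional residue gives the self-map $b_{19}(242m+90)\equiv b_{19}(2m)\pmod 2$; and iterating it produces the geometric-sum constant. The genuine gap is in your proof of the identity itself. Your plan rests on the unproven assertion that $\sum_{n\geq 0}b_{19}(2n)q^n$ (equivalently, via $f_{19}/f_1\equiv f_1f_{19}/f_2\pmod 2$, the even part of $f_1f_{19}$) is congruent modulo $2$ to a single explicit eta-quotient $g$. You flag this as ``the technical heart'' but never produce $g$, and there is no reason to expect such a clean representation: the $2$-dissections used in this circle of problems (for $f_9/f_1$, $f_3/f_1^3$, $1/(f_1f_3)$, etc.) are special low-level identities, and nothing of the kind is available for $f_1f_{19}$. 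Without $g$ the subsequent $11$-dissection and Sturm comparison cannot even be set up. Moreover, even granting $g$, your last step is too quick: the residue-$1$ component $\sum a(11n+1)q^n$ and $q^4g(q^{11})$ arise from $U_{11}$- and $V_{11}$-type operations, which change the level, so the claim that ``both sides are descriptions of a single modular form of a common weight and level'' itself requires an argument. The closing speculation about a CM eigenform is unsupported and plays no role.

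The paper's proof avoids any dissection. It multiplies the full generating function $f_{19}/f_1$ by large auxiliary eta-powers, taking
$H_{11,1}(z)=\eta(19z)\eta^{126}(121z)/\eta(z)$ and $H_{11,2}(z)=\eta(209z)\eta^{126}(11z)/\eta(11z)$,
both holomorphic of weight $63$ on $\Gamma_0(2299)$ with character $\chi_2$, and then extracts the progressions with Hecke operators rather than dissections: modulo $2$, the operator $T_2$ is plain extraction of even-index coefficients (the term $\chi_2(2)2^{62}a(n/2)$ is even); since $f_{121}^{126}\equiv f_{242}^{63}\pmod 2$, after $T_2$ the cofactor becomes $f_{121}^{63}$, a series in $q^{11}$, so the factorization property of $T_{11}$ from Section 2 (valid because $11\mid s$, whence $\chi_2(11)=0$) gives
$H_{11,1}|T_2|T_{11}\equiv q^{29}\sum b_{19}(22n+2)q^n\,f_{11}^{63}$ and $H_{11,2}|T_2\equiv q^{33}\sum b_{19}(2n)q^{11n}\,f_{11}^{63}\pmod 2$.
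Both sides then lie in the one space $M_{63}\left(\Gamma_0(2299),\chi_2\right)$, where Theorem \ref{Sturm} (bound $13860$) plus a Sage computation closes the argument. The common factor $f_{11}^{63}$, built into the auxiliary eta-powers, is precisely the device your sketch lacks: it places both extracted series in a single space of holomorphic forms so that Sturm's theorem applies. To salvage your outline, replace the step ``find an eta-quotient for $B(q)$ modulo $2$'' with this multiply-then-apply-Hecke scheme.
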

In addition to the study of congruences modulo 2, a number of congruences for $b_t (n)$ modulo primes other than 2 have been proven, see for example \cite{baruahdas, cui_Gu_2015, gordon1997, Keith2014}. 
In the following theorem, we relate the $t$-regular partition function $b_{t}(n)$ to the ordinary partition function $p(n)$. 
\begin{theorem}\label{thm5}
	Let $m,a\geq1$ and $b$ be integers. If $p(an+b)\equiv0\pmod{m}$ for all nonnegative integers $n$, then for all positive integers $t$, we have
	\begin{equation*}
	b_{at}(an+b)\equiv0\pmod{m}.
	\end{equation*} 
\end{theorem}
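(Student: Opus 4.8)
The plan is to deduce the congruence directly from the generating function \eqref{gen_fun}. The key structural observation is that, since $at$ is a multiple of $a$, the product $f_{at}=(q^{at};q^{at})_\infty$ is a power series in $q^{a}$. Writing $f_{at}=\sum_{k\ge 0}c_kq^{atk}$ with $c_k\in\mathbb{Z}$ and $c_0=1$, and recalling $\sum_{n\ge0}p(n)q^n=1/f_1$, the identity $\sum_{n\ge0}b_{at}(n)q^n=f_{at}/f_1$ is just the Cauchy product of these two series. Comparing coefficients of $q^N$ gives
$$ b_{at}(N)=\sum_{k\ge 0}c_k\,p(N-atk), $$
with the usual convention $p(j)=0$ for $j<0$.

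Next I would specialize to $N=an+b$. Since every exponent $atk$ is divisible by $a$, this shifts the argument of $p$ only within the residue class of $b$ modulo $a$:
$$ b_{at}(an+b)=\sum_{k\ge 0}c_k\,p\big(a(n-tk)+b\big). $$
For the Ramanujan-type progressions of interest one has $0\le b<a$, and then a nonzero summand forces $n-tk\ge 0$: indeed if $n-tk\le -1$ then $a(n-tk)+b\le b-a<0$, so $p$ vanishes there. Hence the sum effectively runs over $0\le k\le n/t$, and for each such $k$ the argument equals $an'+b$ with $n'=n-tk\ge0$. The hypothesis now applies termwise, $p(an'+b)\equiv0\pmod m$, so every summand is $\equiv0\pmod m$ and therefore $b_{at}(an+b)\equiv0\pmod m$.

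Once the generating-function identity is in hand the argument is essentially bookkeeping, so I do not anticipate a genuine obstacle; the one point requiring care is the range of summation, namely verifying that no nonzero term has an argument lying outside the residues controlled by the hypothesis. This is exactly where the normalization $0\le b<a$ enters, ensuring that each surviving index $n'=n-tk$ is nonnegative. Equivalently, the same conclusion follows from the $a$-dissection $\sum_n b_{at}(an+b)q^{an+b}=f_{at}(q)\sum_n p(an+b)q^{an+b}$, a consequence of the invariance $f_{at}(\zeta^j q)=f_{at}(q)$ for all $j$, where $\zeta=e^{2\pi i/a}$; after reducing modulo $m$ the right-hand factor is annihilated, giving the claim at once.
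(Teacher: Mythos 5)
Your proposal is correct and takes essentially the same route as the paper: the paper expands $f_{at}$ via Euler's pentagonal number theorem (so your coefficients $c_k$ are just the explicit $\pm 1$ pentagonal coefficients), convolves with $\sum_{n\ge 0} p(n)q^n$, and applies the hypothesis termwise to $b_{at}(an+b)=\sum_k c_k\, p\bigl(a(n-tk)+b\bigr)$. If anything, you are more careful than the paper on the one delicate point: the paper asserts without comment that every nonzero summand has nonnegative index $n-tk$, a step which, as you correctly isolate, genuinely requires the normalization $0\le b<a$ (or that the congruence hypothesis hold on the full residue class), a restriction not visible in the theorem's literal statement.
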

As an immediate consequence of Theorem \ref{thm5}, we find that $b_{t}(n)$ satisfies the Ramanujan's famous congruences for some infinite families of $t$. 
\begin{corollary}\label{cor2}
	For all $k,t\geq1$ and every nonnegative integer $n$, we have
	\begin{align*}
	b_{5^{k}t}\left( 5^{k}n+\delta_{5,k}\right) & \equiv0 \pmod{5^{k}},\\
	b_{7^{k}t}\left(7^{k}n+\delta_{7,k}\right)&\equiv0\pmod{7^{\lfloor k/2\rfloor+1}} ,\\
	b_{{11}^{k}t}\left(11^{k}n+\delta_{11,k}\right)&\equiv0\pmod{11^{k}}, 
	\end{align*}
	where $\delta_{p,k}:=1/24\pmod{p^k}$ for $p=5,7,11$. 
\end{corollary}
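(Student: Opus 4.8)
The plan is to read off all three congruences as direct specializations of Theorem~\ref{thm5}, setting $m$ to the appropriate prime power, $a = p^{k}$, and $b = \delta_{p,k}$. The single external ingredient required is the classical family of congruences for the ordinary partition function $p(n)$ modulo powers of $5$, $7$, and $11$, due to Ramanujan, Watson, and Atkin. Specifically, I would invoke that for every $k \geq 1$ and every nonnegative integer $n$,
\begin{align*}
p\left(5^{k}n + \delta_{5,k}\right) &\equiv 0 \pmod{5^{k}},\\
p\left(7^{k}n + \delta_{7,k}\right) &\equiv 0 \pmod{7^{\lfloor k/2\rfloor + 1}},\\
p\left(11^{k}n + \delta_{11,k}\right) &\equiv 0 \pmod{11^{k}},
\end{align*}
where in each case $\delta_{p,k}$ is the unique residue modulo $p^{k}$ satisfying $24\,\delta_{p,k} \equiv 1 \pmod{p^{k}}$, which is exactly the offset $1/24 \pmod{p^{k}}$ appearing in the statement.

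Granting these, the deduction is immediate and identical in all three cases. For the congruence modulo $5^{k}$, I would apply Theorem~\ref{thm5} with $a = 5^{k}$, $b = \delta_{5,k}$, and $m = 5^{k}$: the hypothesis $p(5^{k}n + \delta_{5,k}) \equiv 0 \pmod{5^{k}}$ holds for all $n \geq 0$, so the theorem delivers $b_{5^{k}t}(5^{k}n + \delta_{5,k}) \equiv 0 \pmod{5^{k}}$ for every positive integer $t$, which is the first claim. The second and third claims follow by the same substitution, taking $(a,m) = (7^{k}, 7^{\lfloor k/2\rfloor + 1})$ and $(a,m) = (11^{k}, 11^{k})$ respectively, since then $at = 7^{k}t$ and $at = 11^{k}t$ match the subscripts $b_{7^{k}t}$ and $b_{11^{k}t}$ recorded in the corollary.

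Because Theorem~\ref{thm5} imposes no compatibility condition linking $a$ and $m$, there is no genuine obstacle here; the corollary is a formal consequence of the theorem together with the known prime-power congruences for $p(n)$. The only points that demand care are bookkeeping ones: citing the correct classical result for each prime (noting in particular that the $7$-adic congruence gains only $\lfloor k/2\rfloor + 1$ powers of $7$ rather than the full $k$, per Watson's refinement), and confirming that the offset $\delta_{p,k}$ defined in the corollary coincides with the standard offset $1/24 \pmod{p^{k}}$ governing these congruences, which it does by definition.
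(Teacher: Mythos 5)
Your proposal is correct and is essentially identical to the paper's own proof: the paper also cites the Ramanujan--Watson--Atkin prime-power congruences for $p(n)$ with offset $\delta_{p,k}=1/24\pmod{p^k}$ and then applies Theorem~\ref{thm5} with $a=p^k$, $b=\delta_{p,k}$, and the corresponding modulus $m$. There is nothing to add.
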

In \cite{Keith2014}, Keith studied divisibility of 9-regular partitions by 3. In \cite[Theorem 1]{Keith2014}, he proved that $b_9(4n+3)\equiv 0\pmod 3$ using modular forms techniques. 
In the following theorem, we find the generating function for $b_9(4n+3)$, and as an immediate consequence we obtain that $b_9(4n+3)\equiv 0\pmod 3$.
\begin{theorem}\label{thm6}
	We have
	\begin{align}\label{b9modidentity}
	\sum_{n=0}^{\infty}b_9(4n+3)q^{n}=3\frac{f^{2}_2f^{2}_6f_9}{f^{5}_1}.
	\end{align}
\end{theorem}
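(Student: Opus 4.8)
The plan is to extract the arithmetic progression $4n+3$ from the generating function $\sum_{n=0}^{\infty}b_9(n)q^n = f_9/f_1$ using $2$-dissection techniques. First I would work modulo powers of $q$ to isolate the relevant residue classes. The standard approach is to write $1/f_1$ via a known dissection identity; a particularly useful one expresses $f_9/f_1$ in terms that separate even and odd exponents. Since we ultimately want terms $q^{4n+3}$, I would perform a $2$-dissection twice, or directly a $4$-dissection, tracking which theta-function pieces contribute to exponents $\equiv 3 \pmod 4$.

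Concretely, I would start from the well-known $2$-dissection of $1/f_1$ (or equivalently of $\varphi(-q)^{-1}$ type expansions), for instance using Jacobi-type identities such as
\begin{align*}
\frac{1}{f_1} = \frac{f_8^5}{f_2^2 f_{16}^2} + q\,\frac{f_2^2 f_{16}^2}{f_4^2 f_8 \cdot(\text{appropriate factors})},
\end{align*}
together with the corresponding dissection of $f_9$ (which only has exponents divisible by $9$, so it behaves as a series in $q^9$ and can be split according to parity of the multiplier). Multiplying the dissections of $f_9$ and $1/f_1$ and collecting the coefficient of each residue class modulo $4$, I would identify the generating function for $b_9(4n+3)$. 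The appearance of the factor $3$ and the clean eta-quotient $f_2^2 f_6^2 f_9 / f_1^5$ strongly suggests that after the dissection one lands on an expression that simplifies via standard eta-quotient manipulations (combining $f_4, f_8, f_{16}$ pieces back into lower-level forms using identities like $f_2 = f_1 \cdot (\text{...})$ is not available, so more likely the $3$ emerges from a term with a built-in factor of $3$, such as a piece of the $3$-dissection of some intermediate quotient).

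Given the coefficient $3$ and the level-$9$ data, I suspect the cleaner route is a $3$-dissection combined with a $2$-dissection: since $b_9(4n+3)\equiv 0\pmod 3$ is the downstream corollary, the factor of $3$ must appear explicitly, which typically arises from the $3$-dissection of $f_1$ or of $1/f_1$, where one of the three residue-class components carries a factor of $3$ (this is the classical source of Ramanujan-type congruences mod $3$). So I would use the $3$-dissection $f_1 = f_9\big(R(q^3)^{-1} - q - q^2 R(q^3)\big)$-style identity, or the Borwein-type cubic identities, to expose the factor $3$, and then dissect the result according to parity to reach $4n+3$.

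The main obstacle I expect is the bookkeeping: correctly combining a $2$-dissection with a $3$-dissection (hence effectively working modulo $12$ or extracting a specific class modulo $4$ from an expression naturally organized modulo $6$), and then simplifying the resulting product of eta-quotients at several levels ($f_1,f_2,f_3,f_4,f_6,f_9,\dots$) down to the single clean quotient $3 f_2^2 f_6^2 f_9 / f_1^5$. This simplification will rely on a chain of eta-quotient identities, and verifying that all the spurious terms cancel — leaving exactly one surviving term with coefficient $3$ — is the delicate part. Once \eqref{b9modidentity} is established, the congruence $b_9(4n+3)\equiv 0\pmod 3$ is immediate since the right-hand side is $3$ times a power series with integer coefficients.
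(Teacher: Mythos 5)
Your overall instinct --- pull out the class $4n+3$ from $f_9/f_1$ by iterated $2$-dissection --- is the right one, and it is how the paper proceeds. But the proposal has a genuine gap at the one point where you commit to a concrete mechanism: the origin of the factor $3$. You conjecture that it ``must'' come from a $3$-dissection of $f_1$ or $1/f_1$. That is a wrong turn, and it fails structurally: a $3$-dissection separates exponents by residue modulo $3$, which is orthogonal to the extraction you need (a residue class modulo $4$). To recover the class $3\pmod 4$ you would have to sum the contributions of the classes $3,7,11 \pmod{12}$, which simply recombines the three pieces of the $3$-dissection and leaves you with no new information. In the paper the factor $3$ arises entirely inside the $2$-dissection framework, from the Naika--Gireesh identity
\begin{align*}
\frac{f_3}{f_1^3} = \frac{f_4^6 f_6^3}{f_2^9 f_{12}^2} + 3q\,\frac{f_4^2 f_6 f_{12}^2}{f_2^7},
\end{align*}
whose \emph{odd} part carries an explicit $3$. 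This, not a $3$-dissection, is the source of the mod $3$ divisibility here.

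The paper's actual route is short once the right identities are in hand. Start from Xia--Yao's $2$-dissection
\begin{align*}
\frac{f_9}{f_1} = \frac{f_{12}^3 f_{18}}{f_2^2 f_6 f_{36}} + q\,\frac{f_4^2 f_6 f_{36}}{f_2^3 f_{12}},
\end{align*}
and write the odd part as $q\,\frac{f_4^2 f_{36}}{f_{12}}\cdot \frac{f_6}{f_2^3}$, where $\frac{f_4^2 f_{36}}{f_{12}}$ is a series in $q^4$ and $\frac{f_6}{f_2^3}$ is $f_3/f_1^3$ evaluated at $q^2$. Substituting $q\to q^2$ in the Naika--Gireesh identity splits $\frac{f_6}{f_2^3}$ into a $q^4$-series plus $3q^2$ times a $q^4$-series; only the latter lands in the class $3 \pmod 4$ after multiplying by the prefactor, giving
\begin{align*}
\sum_{n=0}^{\infty} b_9(4n+3)\,q^{4n+3} = 3q^3\,\frac{f_4^2 f_{36}}{f_{12}}\cdot\frac{f_8^2 f_{12} f_{24}^2}{f_4^7} = 3q^3\,\frac{f_8^2 f_{24}^2 f_{36}}{f_4^5},
\end{align*}
which is the claimed identity after $q^4\mapsto q$. (The paper also records $2$-dissections of $f_9/f_1$ and $1/(f_1f_3)$ scaled to $q^2$, but these are only needed for the other residue classes; the even part of $f_9/f_1$ is a series in $q^2$ and contributes nothing to $4n+3$.) Your first-branch plan --- dissecting $1/f_1$ via $f_8,f_{16}$ quotients and separately dissecting $f_9$ --- is not wrong in principle, but you leave the identities as placeholders (``appropriate factors''), and without the two specific dissections above, in particular the one for $f_3/f_1^3$, the cancellation down to a single surviving term with coefficient exactly $3$ will not fall out of that bookkeeping.
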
 
The study of the parity of the coefficients of eta-quotients is one of the most challenging and interesting questions, which has significant applications in the theory of partitions. Given an integral power series $F(q):=\sum_{n=0}^{\infty}a(n)q^n$ and $r\in \{0, 1\}$, we define
\begin{align*}
\delta_r(F, 2; X):=\frac{\#\{n\leq X: a(n)\equiv r \pmod{2}\}}{X}.
\end{align*} 
The power series $F$ is said to have odd density $\delta$ if the limit
\begin{align*}
\lim _{X\rightarrow \infty}\delta_1(F, 2; X)
\end{align*}
exists and is equal to $\delta$. If $F$ has odd density equal to zero, then we say that $F$ is {\it lacunary} modulo 2. In literature, no eta-quotient is known till today whose coefficients have positive odd density.
Using the Serre's seminal work on nondivisibility of coefficients of integral weight modular forms, lacunarity of certain families of eta-quotients can be established.
For example, Cotron et al. \cite{Cotron2020} proved lacunarity of some families of eta-quotients extending the work of Gordon-Ono \cite{gordon1997}. We phrase their theorem as follows:
\begin{theorem}\cite[Theorem 1.1]{Cotron2020}\label{Cotron}
	Let $F(q)=\frac{\prod_{i=1}^{u}f^{r_i}_{\alpha_{i}}}{\prod_{i=1}^{t}f^{s_i}_{\gamma_{i}}}$, and assume that
	\begin{equation*}
	\sum_{i=1}^{u}\frac{r_i}{\alpha_{i}}\geq\sum_{i=1}^{t}s_i\gamma_{i}.
	\end{equation*}
	Then the coefficients of $F$ are lacunary modulo $2$.
\end{theorem}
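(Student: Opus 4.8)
The plan is to reduce $F$ modulo $2$ to the reduction of a holomorphic cusp form of positive integer weight, and then invoke Serre's theorem on the scarcity of nonvanishing Fourier coefficients of modular forms modulo a prime (the result alluded to in the introduction), which forces the odd density to be $0$.

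First I would record the elementary congruence underlying everything: since $(1-X)^2\equiv 1-X^2\pmod 2$, one has $f_k^{2}\equiv f_{2k}\pmod 2$, hence $f_k^{2^a}\equiv f_{2^a k}\pmod 2$ for every $a\ge 0$. Writing the denominator as $B=\prod_{i=1}^{t}f_{\gamma_i}^{s_i}$ and choosing $a$ with $2^a\ge\max_i s_i$, this clears the denominator modulo $2$:
\begin{align*}
F=\frac{\prod_{i=1}^{u}f_{\alpha_i}^{r_i}}{B}=\frac{\left(\prod_{i=1}^{u}f_{\alpha_i}^{r_i}\right)B^{2^a-1}}{B^{2^a}}\equiv\frac{\left(\prod_{i=1}^{u}f_{\alpha_i}^{r_i}\right)B^{2^a-1}}{\prod_{i=1}^{t}f_{2^a\gamma_i}^{s_i}}\pmod 2.
\end{align*}
Set $G_a:=\left(\prod_i f_{\alpha_i}^{r_i}\right)B^{2^a-1}$, an honest eta-product with nonnegative exponents, and $D_a:=\prod_i f_{2^a\gamma_i}^{s_i}=1+O\!\left(q^{2^a\gamma_{\min}}\right)$ with $\gamma_{\min}=\min_i\gamma_i$. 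Then $F\equiv G_a\pmod 2$ as power series agreeing through degree $2^a\gamma_{\min}-1$, so for $X<2^a\gamma_{\min}$ the odd coefficients of $F$ and of $G_a$ among $n\le X$ coincide.

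Next I would identify each $G_a$ as a genuine cusp form. Viewed as $\prod_d\eta(d\tau)^{c_d}$ (times a power of $q$) with all $c_d\ge 0$, its order at every cusp is computed by Ligozat's formula and is strictly positive, so $G_a$ vanishes at all cusps; this is where the hypothesis enters, since the chain $\sum_i r_i\ge\sum_i r_i/\alpha_i\ge\sum_i s_i\gamma_i\ge\sum_i s_i$ (using $\alpha_i,\gamma_i\ge1$) shows the weight $\tfrac12(\sum_i r_i-\sum_i s_i)$ is nonnegative and the inequality pins the order at the cusp $0$ against that at $\infty$, guaranteeing holomorphicity. Thus $G_a$ is a holomorphic cusp form (with nebentypus if needed, and of integral weight after replacing $G_a$ by $G_a^2$ should the weight be half-integral, which does not affect lacunarity since $G_a^2\equiv\sum a(n)q^{2n}\pmod 2$). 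Serre's theorem then yields $\#\{n\le X:\ a_{G_a}(n)\ \text{odd}\}=o(X)$, and together with the truncation above this should give $\#\{n\le X:\ a_F(n)\ \text{odd}\}=o(X)$, i.e. $F$ is lacunary modulo $2$.

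The hard part is precisely this last passage. The auxiliary forms $G_a$ have weight and level growing with $a$, while the truncation forces $a\asymp\log_2 X$ as $X\to\infty$, so one cannot simply quote a single form-by-form density statement: a naive application of the quantitative bound $\#\{n\le X: a(n)\not\equiv0\ (2)\}\ll_{G_a} X/(\log X)^{\alpha}$ is defeated by the growth of the implied constant along the tower $\{G_a\}$. Overcoming this requires a version of Serre's estimate that is uniform across the family (equivalently, the strong nilpotency of the Hecke action modulo $2$ on the relevant space), so that the diagonal choice $a=a(X)$ still produces a bound that is $o(X)$. Securing this uniformity, while checking via Ligozat's formula that the cusp conditions persist for all large $a$, is where the hypothesis $\sum_i r_i/\alpha_i\ge\sum_i s_i\gamma_i$ does the decisive work.
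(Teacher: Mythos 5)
Your outline breaks precisely at the point you flag, and the uniformity you appeal to there is not a technical loose end that can be secured: it is unobtainable, because your construction never genuinely uses the hypothesis $\sum_i r_i/\alpha_i\geq\sum_i s_i\gamma_i$. Note that your forms $G_a=\bigl(\prod_i f_{\alpha_i}^{r_i}\bigr)B^{2^a-1}$ are eta-\emph{products} with nonnegative exponents, and such products vanish at every cusp automatically; your claim that the hypothesis ``guarantees holomorphicity'' of $G_a$ is vacuous (the chain of inequalities you write concerns the weight of $F$, not of $G_a$, whose weight is $\tfrac12(\sum_i r_i+(2^a-1)\sum_i s_i)$ and is trivially positive). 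Consequently your truncation-plus-Serre scheme applies verbatim to \emph{every} eta-quotient, e.g.\ to $1/f_1=\sum p(n)q^n$ or to $f_t/f_1$ for the values of $t$ in Conjecture \ref{conj3}. So if a Serre-type bound uniform over the tower $\{G_a\}$ with $a\asymp\log_2 X$ existed, it would prove that all eta-quotients are lacunary modulo $2$, contradicting the expectation (Parkin--Shanks, and Conjecture \ref{conj3}(2) itself) that these series have odd density $1/2$. The gap is therefore structural, not merely unverified.

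The repair, and the way Gordon--Ono and Cotron et al.\ actually argue (the present paper only cites the result, it does not prove it), is to avoid truncation altogether: since $f_{\gamma_i}^{s_i2^a}\equiv f_{2^a\gamma_i}^{s_i}\pmod 2$ and both series have constant term $1$, their \emph{reciprocals} are also congruent modulo $2$, whence
\begin{equation*}
F=\frac{\prod_i f_{\alpha_i}^{r_i}\prod_i f_{\gamma_i}^{s_i(2^a-1)}}{\prod_i f_{\gamma_i}^{s_i2^a}}\equiv\frac{\prod_i f_{\alpha_i}^{r_i}\prod_i f_{\gamma_i}^{s_i(2^a-1)}}{\prod_i f_{2^a\gamma_i}^{s_i}}=:H_a\pmod 2
\end{equation*}
as an identity of full power series, for every $a$. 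The form $H_a$ retains a denominator, and this is exactly where the hypothesis acts: by the cusp-order formula (Theorem \ref{thm_ono2}), using $\gcd(d,2^a\gamma_i)\leq\gcd(d,2^a)\gcd(d,\gamma_i)$ and $\gcd(d,2^a)^2/2^a\leq 2^a$, the contribution of $f_{\gamma_i}^{s_i(2^a-1)}/f_{2^a\gamma_i}^{s_i}$ at any cusp $c/d$ is at least $-s_i\gcd(d,\gamma_i)^2/\gamma_i\geq -s_i\gamma_i$, so the order of $H_a$ at every cusp is bounded below by a positive factor times $\sum_i r_i/\alpha_i-\sum_i s_i\gamma_i\geq 0$. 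Thus a \emph{single} $H_a$ (one fixed $a$, hence fixed weight and level, after the standard adjustments needed to meet the conditions of Theorem \ref{thm_ono1}) is a holomorphic modular form congruent to $F$ modulo $2$, and one application of Serre's density theorem to that one form yields lacunarity. Your version discards the denominator, and with it the only place where the hypothesis can do any work.
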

Keith and Zanello \cite{Keith2021} conjectured the following non-congruences, analogous to Ono's and Radu's theorems for the partition function (see \cite{ono1996, radu2}):
\begin{conj}\cite[Conjecture 15]{Keith2021}\label{conj3}
Let $t\in \{6,10,14,15,18,20,22,26,27,28\}$. We have
\begin{enumerate}
\item For no integers $A>0$ and $B\geq 0$, $b_t(An+ B)\equiv 0\pmod 2$ for all $n\geq 0$.
\item The series $\frac{f_t}{f_1}$ has odd density 1/2.
\end{enumerate}
\end{conj}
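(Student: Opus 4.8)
The plan is to attack both parts through the reduction of $\frac{f_t}{f_1}$ to a modular object modulo $2$ and then to analyse its coefficient support and density via the mod-$2$ Hecke theory. First I would fix $k$ large and use the congruence $f_1^{2^k}\equiv f_{2^k}\pmod 2$, which gives
\begin{align*}
f_{2^k}\cdot\frac{f_t}{f_1}\equiv f_t\,f_1^{2^k-1}\pmod 2,
\end{align*}
so that the right-hand side is a holomorphic eta-quotient and, after multiplying by a suitable power of $q$ and correcting weight and level, becomes the reduction of a modular form $g_t$ on some $\Gamma_0(N_t)$ carrying an action of the Hecke operators $T_\ell$ (or $T_{\ell^2}$ in the half-integral case). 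For each $t$ in the list the hypothesis $\sum r_i/\alpha_i\geq\sum s_i\gamma_i$ of Theorem \ref{Cotron} fails, since $1/t<1$, so these series fall outside the lacunary regime and are expected to have positive odd density; this is exactly the range where part (2) lives and where Theorem \ref{Cotron} gives no information.

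For part (1) I would show that no progression is entirely even by exploiting this Hecke structure. Suppose for contradiction that $b_t(An+B)\equiv 0\pmod 2$ for all $n$, and write $A=2^aM$ with $M$ odd. Applying the operators $U_M,V_M$ and $U_{2^a}$ to $g_t$, the assumption forces a nonzero mod-$2$ modular form to be supported off the residue class $B\bmod A$; I would then use that the residual representation attached to the relevant eigenform component has nontrivial image, so that a positive proportion of the eigenvalues $a(\ell)$ are odd, and that mod-$2$ multiplicativity pushes odd coefficients into every arithmetic progression, contradicting the assumed vanishing. Here the decisive input is non-lacunarity, established in the previous paragraph, together with the search for self-similarity congruences of the type proved in Theorems \ref{thm1} and \ref{thm2}: such a relation on a subprogression would, under iteration, force an all-even progression to generate a lacunary image of $g_t$, which is impossible once non-lacunarity is in hand.

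Part (2) is the genuine obstacle, and it is where new input is required. The clean reduction above is adequate for support and lacunarity questions because the factor $f_{2^k}$ has density-zero support, but for the \emph{exact} density $1/2$ it cannot be discarded: dividing by $f_{2^k}$ reintroduces a non-sparse series, so one must study $\frac{f_t}{f_1}$ intrinsically as a weight-$0$ weakly holomorphic form modulo $2$. The plan is to identify $\frac{f_t}{f_1}\bmod 2$ with a mod-$2$ form whose Frobenius traces $a(\ell)\bmod 2$ are equidistributed between $0$ and $1$, and then to combine multiplicativity with a sieve to conclude that $\delta_1\!\left(\frac{f_t}{f_1},2;X\right)\to\tfrac12$. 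The hard part will be precisely this equidistribution with the exact constant $\tfrac12$ rather than merely some positive value: as the introduction records, no eta-quotient is currently known to have positive odd density, and the eigenform/Galois machinery tends to produce either density $0$ (the lacunary, CM-like case) or densities that are generically not $\tfrac12$. Pinning the value to $\tfrac12$ amounts to proving that $\frac{f_t}{f_1}$ is maximally non-lacunary, i.e.\ that its coefficients modulo $2$ are genuinely equidistributed; I would try to force this through a density-scaling functional equation coming from self-similarity congruences for these specific $t$, using non-lacunarity from part (1) to rule out the fixed point $0$ and the mod-$2$ equidistribution input to exclude every other fixed point, leaving $\tfrac12$ as the only admissible value.
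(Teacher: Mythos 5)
The first thing to flag is that this statement is a \emph{conjecture} (Conjecture 15 of Keith--Zanello), and the paper does not prove it; the authors obtain only partial results toward it. Theorem \ref{thm3}, proved by applying the incongruence theorem of Garthwaite and Jameson (Theorem \ref{garthwaite_incong}) to $f(\tau)=\eta^{t-1}(\tau)\frac{\eta(t\tau)}{\eta^{t}(\tau)}$ with $v_0=0$, shows only that $\sum_{n\geq 0} b_t(An+B)q^n\not\equiv 0\pmod 2$ for \emph{infinitely many} progressions, namely those with $B\equiv (t-1)\frac{d^2-1}{24}\pmod A$ for some $d$ with $\gcd(d,6tA)=1$; part (1) remains open for all other progressions. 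Theorems \ref{thm4} and \ref{thm4a}, proved by Ahlgren's elementary convolution-counting method with pentagonal-number dissections, give only $\gg\sqrt{X}$ even values for $t=6,10,14$, far short of the density-$1/2$ assertion of part (2). So your proposal must be judged as an attempt at an open problem, not measured against a proof the paper contains.

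On its own terms the proposal has genuine gaps in both parts. In part (1), the decisive step --- ``mod-$2$ multiplicativity pushes odd coefficients into every arithmetic progression'' --- is exactly the open difficulty, not a consequence of your setup: Hecke multiplicativity controls coefficients at indices with prescribed multiplicative structure, not in prescribed residue classes, and mod $2$ the residual representations attached to the relevant forms are typically reducible with very small image, so ``nontrivial image'' plus Chebotarev does not produce odd coefficients in every class $B\bmod A$. You also invoke ``non-lacunarity, established in the previous paragraph,'' but your first paragraph only observes that the hypothesis of Theorem \ref{Cotron} fails (indeed $1/t<1$); that makes the theorem silent, it does not establish positive odd density --- as the paper itself records, no eta-quotient is currently known to have positive odd density, so this input is simply unavailable. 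In part (2) you candidly concede that the exact constant $\tfrac12$ requires an ``equidistribution input'' you do not derive; the proposed ``density-scaling functional equation'' from self-similarity congruences is not formulated precisely enough to constrain the density to any value, and in any case the self-similarity results of Theorems \ref{thm1} and \ref{thm2} concern $t=9,19$, which are not among the $t$ in this conjecture. A blind attempt aligned with what is actually provable would instead target the paper's partial results: non-vanishing on the explicit progressions supplied by Theorem \ref{garthwaite_incong}, and $\sqrt{X}$-type lower bounds for even values via the pentagonal-number convolution argument of Theorems \ref{thm4} and \ref{thm4a}.
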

In the following theorem, for a given prime $\ell$, we prove that there are infinitely many arithmetic progressions $An+B$ for which $\sum_{n=0}^{\infty}b_t(An+B)q^n\not\equiv 0\pmod{\ell}$.
\begin{theorem}\label{thm3}
Let $\ell$ be a prime and let $t,r$ be positive integers with $t\geq 2$. Then we have
\begin{align*}
\sum_{n=0}^{\infty} b_t(rn+s)q^{n}\not\equiv 0 \pmod {\ell}
\end{align*} 
for all $s\in\left\lbrace 0,1,\ldots,r-1\right\rbrace $ such that
$
s\equiv (t-1)\frac{d^{2}-1}{24} \pmod{r}
$
for some integer $d$ with $\gcd(d,6tr)=1$.
\end{theorem}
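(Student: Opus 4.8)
The plan is to fix $\ell$, $t\geq 2$, $r$, and a residue $s$ with $s\equiv (t-1)\frac{d^2-1}{24}\pmod r$ for some $d$ coprime to $6tr$, and to exhibit a single $N\equiv s\pmod r$ with $\ell\nmid b_t(N)$; that alone forces $\sum_{n}b_t(rn+s)q^n\not\equiv 0\pmod\ell$. The starting point is Euler's pentagonal number theorem in the form $f_t=\sum_{\gcd(d,6)=1}\chi(d)q^{t(d^2-1)/24}$ with $\chi(d)\in\{\pm1\}$; since $\gcd(d,6)=1$ forces $d^2\equiv 1\pmod{24}$, the exponents $\frac{d^2-1}{24}$ are exactly the generalized pentagonal numbers. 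Comparing coefficients in $f_t=f_1\cdot\frac{f_t}{f_1}$ at $q^{t(d^2-1)/24}$ produces a linear relation among $b_t$-values whose right-hand side is the unit $\chi(d)$ and whose diagonal term sits precisely at $t\frac{d^2-1}{24}-\frac{d^2-1}{24}=(t-1)\frac{d^2-1}{24}\equiv s\pmod r$. This explains why the prescribed square-class progressions are the natural ones, but since the relation only guarantees that \emph{some} term of the sum is a unit (not the diagonal one), it does not by itself pin the nonzero coefficient to the progression. Removing this ambiguity is the crux.

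To localize a nonzero coefficient, I would pass to modular forms. Fix $a$ large with $\ell^a\equiv 1-t\pmod{24}$ and set $g:=\eta(z)^{\ell^a-1}\eta(tz)=q^{(\ell^a+t-1)/24}f_1^{\ell^a-1}f_t$, so that (after the standard substitution $q\mapsto q^{24}$) it has integral $q$-expansion with leading coefficient $1$. For $\ell^a$ large its orders at all cusps are positive, so $g$ is a holomorphic (cuspidal) modular form of weight $\ell^a/2$ on some $\Gamma_0(N)$ with character, and $g\not\equiv 0\pmod\ell$. Using $\eta(z)^{\ell^a}\equiv\eta(\ell^a z)\pmod\ell$ one obtains
\[ g\equiv \eta(\ell^a z)\cdot\frac{\eta(tz)}{\eta(z)}=\Big(\sum_{\gcd(e,6)=1}\chi(e)q^{\ell^a e^2}\Big)\Big(\sum_{n\ge 0}b_t(n)q^{24n+t-1}\Big)\pmod\ell, \]
in the $q^{24}$-normalization. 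Because the exponents $\ell^a e^2$ are spaced $\sim\ell^a$ apart, inside a window of length $\sim\ell^a$ only the term $e=1$ contributes, and there the coefficients of $g$ are literally the values $b_t(n)$; thus $g$ is a genuine modular incarnation of $b_t$, and the progression $n\equiv s\pmod r$ corresponds to a square class, since $s\equiv (t-1)\frac{d^2-1}{24}\pmod r$ is equivalent to $24N+t-1\equiv (t-1)d^2\pmod{24r}$.

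Finally I would invoke the nonvanishing modulo $\ell$ of the coefficients of $g$ along square classes whose squarefree part is coprime to the level. Since $g\not\equiv 0\pmod\ell$, and square classes are exactly the progressions on which theta-type contributions (hence the reductions of modular forms) are supported, the coefficient of $g$ cannot vanish identically on the class determined by $d$; the hypothesis $\gcd(d,6tr)=1$ is precisely what makes this square class admissible --- coprime to $6$ (so that $\frac{d^2-1}{24}\in\mathbb Z$), to $t$, and to the modulus $r$ governing both the level and the Frobenius twist $\eta(\ell^a z)$. Pulling a nonzero such coefficient back through the displayed congruence yields an $N\equiv s\pmod r$ with $b_t(N)\not\equiv 0\pmod\ell$. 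I expect the hardest step to be exactly this square-class nonvanishing, which should follow by combining the theta decomposition above with a Serre/Shimura-type nonvanishing statement and carefully tracking the admissibility conditions packaged into $\gcd(d,6tr)=1$; by comparison, the eta-quotient bookkeeping (holomorphy, level, weight, and character for the chosen $a$) is routine.
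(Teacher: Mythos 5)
Your proposal correctly identifies the shape of the problem (exhibit one $N\equiv s\pmod r$ with $\ell\nmid b_t(N)$, and explain why the square-class progressions $(t-1)\frac{d^2-1}{24}$ are the natural ones), but it has a genuine gap at exactly the point you flag as the crux. The step you say you would ``invoke'' --- that a form $g\not\equiv 0\pmod\ell$ cannot have coefficients vanishing identically mod $\ell$ on the square class determined by $d$ --- is not an off-the-shelf Serre/Shimura-type result; it \emph{is} the theorem to be proved. The heuristic you offer in its support (``square classes are exactly the progressions on which the reductions of modular forms are supported'') is false as a general principle: a mod-$\ell$ reduction of a modular form is supported on finitely many square classes only in the very special (lacunary/theta-congruent) case, and for your $g$ there is no reason its reduction should have this shape. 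Consequently the deduction ``$g\not\equiv 0\pmod\ell$, hence its coefficients are nonzero on the class of $d$'' does not follow from anything you have set up. The paper's proof is a two-line application of precisely this missing ingredient in its correct form: the incongruence theorem of Garthwaite and Jameson (Theorem \ref{garthwaite_incong}), applied to $f(\tau)=q^{(t-1)/24}\sum_{n\ge 0}b_t(n)q^n=\eta^{t-1}(\tau)\cdot\frac{\eta(t\tau)}{\eta^t(\tau)}\in\mathcal{S}\bigl(t-1,-\tfrac{t-1}{2},24t,\chi\bigr)$ with $v_0=0$, $u=r$, using $b_t(0)=1$. In effect, your plan amounts to re-deriving the Garthwaite--Jameson theorem from scratch, and the hard part of that derivation is left unproved.

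There are also concrete technical failures in the construction itself. The eta-quotient $g=\eta(z)^{\ell^a-1}\eta(tz)$ requires $\ell^a\equiv 1-t\pmod{24}$ to have integral exponents after the $q\mapsto q^{24}$ normalization, and this congruence is frequently unsolvable: for instance $\ell=5$, $t=6$ needs $5^a\equiv 19\pmod{24}$, while $5^a$ is always $1$ or $5$ mod $24$; similar obstructions occur for $\ell=2,3$ and most $t$. Moreover, the ``window'' argument only identifies coefficients of $g$ with values $b_t(n)$ for $n$ in bounded ranges; once $24n+t-1$ exceeds $\ell^a(e^2-1)$ for $e>1$, the theta terms overlap, so one cannot read off $b_t$ along an entire residue class mod $r$ from $g$ without disentangling the contributions of all $e$ --- which is again the heart of the Garthwaite--Jameson argument, not a routine step. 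To repair the proof, replace the final paragraph by a citation of their theorem (or an actual proof of it), at which point the rest of your scaffolding becomes unnecessary.
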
	
Theorem \ref{thm3} with $\ell=2$ proves Conjecture \ref{conj3} (1) for infinitely many arithmetic progressions.
In Conjecture \ref{conj3} (2), Keith and Zanello claimed that the series $\frac{f_t}{f_1}$ has odd density 1/2 for $t\in \{6,10,14,15,18,20,22,26,27,28\}$. 
In the following two theorems, we obtain quantitative estimates for the distributions of $b_t(n)$ for $t=6,10,14$.
\begin{theorem}\label{thm4}
For large $X$ and $t=6, 10$, we have
\begin{align}\label{b6bound1}
\#\left\lbrace n\leq X : b_{t}\left( n\right)\ is\ even \right\rbrace \gg \sqrt{X}.
\end{align}
\end{theorem}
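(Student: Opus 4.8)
The plan is to exploit the trivial but powerful factorization coming from \eqref{gen_fun}. Writing $F_t(q):=\sum_{n\ge0}b_t(n)q^n$, the defining relation gives the exact identity $f_t=f_1\cdot F_t$, which I would read modulo $2$. The whole leverage comes from the fact that $f_t$ is \emph{lacunary} mod $2$: by Euler's pentagonal number theorem $f_t=\sum_{k\in\mathbb Z}(-1)^kq^{tk(3k-1)/2}$, so its nonzero coefficients occur only at $t$ times a generalized pentagonal number, whence
\begin{align*}
\#\{n\le X:\ [q^n]f_t\ \text{is odd}\}\ll\sqrt X .
\end{align*}
I would play this sparsity off against a comparison series that becomes \emph{dense} after multiplication by $f_1$, namely the all-ones series $\frac{1}{1+q}\equiv\sum_{n\ge0}q^n\pmod2$.

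First I would record the density of this comparison series. Since $f_1\equiv\sum_{k}q^{P_k}\pmod2$ with $P_k=k(3k-1)/2$, one has
\begin{align*}
\frac{f_1}{1+q}\equiv f_1\sum_{n\ge0}q^n\equiv\sum_{n\ge0}\Big(\#\{k:\ P_k\le n\}\bmod 2\Big)q^n\pmod2,
\end{align*}
so the $n$-th coefficient is the parity of the number of generalized pentagonal numbers up to $n$. This parity is constant on each interval between consecutive pentagonal numbers and flips as $n$ crosses one; since there are $\asymp\sqrt X$ such numbers up to $X$ and the alternating blocks fill up all of $[0,X]$, a positive proportion of the coefficients are odd, that is
\begin{align*}
\#\Big\{n\le X:\ \big[q^n\big]\tfrac{f_1}{1+q}\ \text{is odd}\Big\}\gg X .
\end{align*}

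Now I would introduce the complement of $F_t$ modulo $2$. Let $E(q):=\frac{1}{1+q}+F_t(q)$, so that $[q^n]E\equiv 1+b_t(n)\pmod2$ and hence the odd coefficients of $E$ up to $X$ are exactly the $z:=\#\{n\le X:\ b_t(n)\ \text{even}\}$ we wish to bound below. Multiplying $F_t=\frac{1}{1+q}+E$ by $f_1$ gives, modulo $2$,
\begin{align*}
f_t\equiv\frac{f_1}{1+q}+E\,f_1\pmod2 .
\end{align*}
On the one hand $E f_1$ has support inside the sumset of the supports of $E$ and $f_1$ in $[0,X]$, so it has $\ll z\cdot\#\{k:P_k\le X\}\ll z\sqrt X$ odd coefficients there. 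On the other hand, wherever $\frac{f_1}{1+q}$ is odd and $f_t$ is even, the term $E f_1$ is forced to be odd. Using the elementary bound $\#\mathrm{odd}(a+b)\ge\#\mathrm{odd}(a)-\#\mathrm{odd}(b)$ over $\mathbb F_2$,
\begin{align*}
z\sqrt X\ \gg\ \#\mathrm{odd}(E f_1)\ \ge\ \#\mathrm{odd}\Big(\tfrac{f_1}{1+q}\Big)-\#\mathrm{odd}(f_t)\ \gg\ X-\sqrt X\ \gg\ X,
\end{align*}
so $z\gg\sqrt X$, which is exactly \eqref{b6bound1}. Note the argument uses nothing special about $t$ beyond the lacunarity of $f_t$, so it applies verbatim for every $t\ge2$; I would simply specialize to $t=6,10$ as in the statement.

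I expect the only genuine obstacle to be the middle step: making rigorous that $\frac{f_1}{1+q}$ has a positive proportion of odd coefficients. This requires a clean handle on the parity of the counting function of the generalized pentagonal numbers and a careful accounting of the alternating block lengths. Everything else is bookkeeping of support sizes, and it is precisely the $\asymp\sqrt X$ size of $\mathrm{supp}(f_1)$ in $[0,X]$ (equivalently, the lacunarity exponent of $f_t$) that pins the final exponent at $1/2$ rather than $1$.
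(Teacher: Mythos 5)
Your proof is correct, but it takes a genuinely different route from the paper's. The paper follows Ahlgren's method and leans on special mod-$2$ identities: for $t=6$ it combines $\sum b_6(n)q^n\equiv f_3^2/f_1\pmod 2$ with the Keith--Zanello theta identity $f_3^3/f_1\equiv\sum_{n\in\mathbb Z} q^{n(3n-2)}\pmod 2$ to get a congruence of the shape (lacunary series) $\equiv\bigl(\sum b_6(n)q^n\bigr)\cdot(\text{pentagonal-type theta series})$; it then introduces the convolution index sets $\mathcal{M}_n$, shows $|\mathcal{M}_n|$ is odd for $\gg X$ values of $n$ by an interval-length count, deduces that for each such $n$ some $b_6(m)$ with $m\in\mathcal{M}_n$ is even, and finishes by double counting pairs $(n,m)$, using that each fixed $m$ lies in $\ll\sqrt X$ of the sets $\mathcal{M}_n$ (for $t=10$ a different pair of identities plays the same role). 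Your argument needs none of these special identities: you work directly from the universal relation $f_t=f_1F_t$ and replace the explicit double count by the complement series $E=\frac{1}{1+q}+F_t$ together with a sumset bound. The two proofs are morally the same count in different clothing --- your coefficient of $f_1/(1+q)$ at $n$ is exactly the parity of the paper's $|\mathcal{M}_n|$, and your bound $\#\mathrm{odd}(Ef_1)\ll z\sqrt X$ is exactly their ``each $m$ lies in $\ll\sqrt X$ sets'' step --- but your packaging buys real generality: since it uses only the pentagonal number theorem and the lacunarity of $f_t$, it proves $\#\{n\le X: b_t(n)\ \text{even}\}\gg\sqrt X$ for \emph{every} $t\ge 2$ simultaneously, whereas the paper's proof is tied to $t=6,10$ and needs yet another identity to handle $b_{14}(2n)$ in the subsequent theorem. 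Finally, the step you flag as the potential obstacle is not one: the parity of $\#\{k: P_k\le n\}$ is constant between consecutive generalized pentagonal numbers and flips at each, and the odd-parity blocks have lengths $1,3,5,7,\dots$ against even-parity blocks of lengths $1,2,3,\dots$, so the odd positions have density $2/3>0$; this is the same elementary computation as the paper's intervals $I_k$.
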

\begin{theorem}\label{thm4a}
For large $X$, we have
\begin{align*}
\#\left\lbrace n\leq X: b_{14}\left( 2n\right)\ is\ even \right\rbrace \gg \sqrt{X/3}.
\end{align*}
\end{theorem}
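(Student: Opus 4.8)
The plan is to reduce the generating function modulo $2$ by a $2$-dissection, isolate inside it a theta function supported on a sparse quadratic sequence, and then count the indices lying on that sequence.

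First I would carry out a $2$-dissection of $f_{14}/f_1$ modulo $2$. Since $f_{2k}\equiv f_k^2\pmod 2$, we have $\frac{1}{f_1}\equiv\frac{f_1}{f_2}\pmod 2$, whence $\frac{f_{14}}{f_1}\equiv\frac{f_1 f_{14}}{f_2}\pmod 2$. The useful observation is that $f_{14}$ and $f_2$ involve only even powers of $q$, so $\frac{f_{14}}{f_2}$ is a power series in $q^2$; indeed $\frac{f_{14}}{f_2}=\sum_{n\ge0}b_7(n)q^{2n}$. Writing $f_1=E(q)+O(q)$, where $E$ collects the terms of $f_1$ of even exponent and $O$ those of odd exponent, the even part of the product is governed entirely by $E$, and I obtain
\[
\sum_{n\ge0}b_{14}(2n)q^{n}\equiv E^{*}(q)\,\frac{f_7}{f_1}\pmod 2,
\]
where $E^{*}$ is the series obtained from $E(q)$ by substituting $q$ for $q^2$. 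By the classical $2$-dissection of $(q;q)_\infty$ coming from Euler's pentagonal number theorem, $E^{*}$ is an explicit sum of two theta functions. This step is routine.

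Next I would pin down the quadratic sequence that produces the even values. The factor $3$ in $\sqrt{X/3}$ suggests that, after combining $E^{*}$ with $f_7/f_1$ modulo $2$, the coefficient of $q^n$ is forced to be even along the sequence $n=3k^2+k=k(3k+1)$ with $k\ge1$ (equivalently, twice the generalized pentagonal number $P_{-k}$). Concretely I would establish
\[
b_{14}(2(3k^2+k))\equiv0\pmod 2\qquad(k\ge1)
\]
by expanding $E^{*}$ as a combination of theta functions, extracting the coefficient of $q^{3k^2+k}$ in $E^{*}\cdot(f_7/f_1)$, and proving that the number of odd contributions to this coefficient is even, either by an explicit pairing of the representations or through an auxiliary theta-function identity that makes the vanishing transparent. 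This is the step I expect to be the main obstacle: the dissection above and the counting below are mechanical, but controlling the parity of the coefficient exactly along the sparse sequence $\{3k^2+k\}$ is the real content, and it will require a precise identity rather than a soft estimate.

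Finally, granting the congruence, the conclusion is immediate. The indices $n=3k^2+k$ with $k\ge1$ and $3k^2+k\le X$ number $\gg\sqrt{X/3}$, and $b_{14}(2n)$ is even at each of them, which yields $\#\{n\le X:b_{14}(2n)\text{ is even}\}\gg\sqrt{X/3}$.
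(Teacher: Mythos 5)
Your opening dissection is fine: one does have
\begin{align*}
\sum_{n\geq 0}b_{14}(2n)q^{n}\equiv E^{*}(q)\,\frac{f_{7}}{f_{1}}\pmod 2,
\end{align*}
with $E^{*}$ built from the even-exponent pentagonal terms of $f_1$, and the closing count (there are $\gg\sqrt{X/3}$ integers of the form $3k^2+k$ up to $X$) is also fine. The genuine gap is the middle step, which is the entire content of your argument: the exact congruence $b_{14}\left(2(3k^2+k)\right)\equiv 0\pmod 2$ for all $k\geq 1$ is not proved, and nothing in your sketch gives a path to it. In your identity, the second factor $f_7/f_1$ is the full parity of the $7$-regular partition function; it is not a theta series, its odd values have no known description, and indeed Keith and Zanello conjecture this type of series behaves "randomly" (odd density $1/2$). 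Proving that the coefficient of $q^{3k^2+k}$ in the convolution $E^{*}\cdot(f_7/f_1)$ vanishes mod $2$ for \emph{every} $k$ would require exact control of the parity of $b_7(n)$, which neither an "explicit pairing" nor an "auxiliary theta-function identity" can be expected to deliver; such a congruence, if true at all, would be a new theorem harder than the statement you are trying to prove. You have also reverse-engineered the constant incorrectly: in the theorem, $\sqrt{X/3}$ does not arise as the counting function of a quadratic sequence.

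The paper's proof is non-constructive and rests on an analytic input that your proposal has no analogue of: lacunarity. By the Baruah--Das congruence \eqref{b7}, $\sum_{n}b_7(2n+1)q^n\equiv f_1f_{14}\pmod 2$, and by Theorem \ref{Cotron} the coefficients of $f_1f_{14}$ have odd density zero; hence $a(n):=b_7(4n+1)$ is odd for only $o(X)$ values of $n\leq X$. Setting $c(n):=b_{14}(2n)$, the identity \eqref{b14_main} gives the convolution relation $a(n)\equiv\sum_{m\in\mathcal{M}_n}c(m)\pmod 2$, where $\mathcal{M}_n=\{n-u_k:0\leq u_k\leq n\}$ and $u_k=\tfrac{1}{2}k(3k-1)$ are the generalized pentagonal numbers. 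The cardinality $|\mathcal{M}_n|$ is odd precisely when $n$ lies in certain intervals whose union has measure $\gg X$ in $[0,X]$; for such $n$ with $a(n)$ even (almost all of them, by lacunarity), the sum $\sum_{m\in\mathcal{M}_n}c(m)$ has an odd number of terms and is even, so at least one $c(m)$ must be even. This produces $\gg X$ pairs $(n,m)$ with $c(m)$ even, and since each fixed $m$ lies in $\ll\sqrt{3X}$ of the sets $\mathcal{M}_n$, double counting yields $\gg X/\sqrt{3X}=\sqrt{X/3}$ distinct even values --- which is where the constant actually comes from. No specific $n$ with $b_{14}(2n)$ even is ever identified. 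Without either proving your proposed congruence or importing a density-zero input of the above kind, your argument does not go through.
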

In \cite{Keith2021}, Keith and Zanello demonstrated that an odd density may be constant over appropriate infinite families of eta-quotients. 
As an example, they proved that for any nonnegative integer $k$, the odd density of $\frac{f^{9k+2}_{3}}{f^{3k+1}_{1}}$ is same as the odd density of $6$-regular partition function. 
In the following theorem, we prove that the odd density of $\frac{f^{7k+1}_{7}}{f^{k+1}_{1}}$ is same as the odd density of $7$-regular partition function.
\begin{theorem}\label{thm7}
Let $\delta^{\left( k\right)}_7$ denote the odd density of 
\begin{align*}
\frac{f^{7k+1}_{7}}{f^{k+1}_{1}}.
\end{align*}
If $\delta^{\left( k\right)}_7$ exists for any $k\geq 0$ then $\delta^{(k)}_7$ exists for all $k\geq 0$, and its value is independent of $k$. 
In particular, if the odd density of $7$-regular partitions $\delta^{(0)}_{7}$ exists then all of the $\delta^{\left( k\right)}_7$ exist and are equal to $\delta^{(0)}_{7}$.
\end{theorem}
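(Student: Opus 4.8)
The plan is to work throughout modulo $2$, to set $F_k:=\dfrac{f_7^{7k+1}}{f_1^{k+1}}$, and to prove by induction on $k$ that $\delta^{(k)}_7$ exists precisely when $\delta^{(k-1)}_7$ does and that the two are then equal. Since $F_0=\dfrac{f_7}{f_1}$ is exactly the $7$-regular generating function, this propagates the existence of a single $\delta^{(k)}_7$ in both directions along the family and pins the common value to $\delta^{(0)}_7$. The mechanism I would use is a single modular relation together with a $2$-dissection that exhibits $F_k$ as a density-preserving transform of $F_{k-1}$ up to a lacunary error controlled by Theorem~\ref{Cotron}.

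First I would record the elementary congruence $f_a^2\equiv f_{2a}\pmod 2$ (equivalently $g(q)^2\equiv g(q^2)$) and use it on the ratio $F_k/F_{k-1}=\dfrac{f_7^7}{f_1}$. Writing $f_7^6=(f_7^3)^2\equiv f_{14}^3$ isolates a lacunary factor, since by Jacobi's identity $f_1^3\equiv\sum_{j\ge 0}q^{T_j}\pmod 2$ and hence $f_{14}^3\equiv\sum_{j\ge 0}q^{14\,T_j}\pmod 2$. Thus modulo $2$
\begin{align*}
F_k \equiv f_{14}^{3}\,\frac{f_7}{f_1}\,F_{k-1}.
\end{align*}
Next I would perform the $2$-dissection $F_{k-1}(q)\equiv A(q^2)+q\,B(q^2)$, combine it with the standard $2$-dissection of $1/f_1$, and rewrite each resulting piece of $f_{14}^3\,\tfrac{f_7}{f_1}\,A(q^2)$ and of $f_{14}^3\,\tfrac{f_7}{f_1}\,q\,B(q^2)$ as an $\mathbb{F}_2$-combination of eta-quotients. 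The goal is to show, after collecting terms, that $F_k$ coincides modulo $2$ with an explicit member of the family under an index map that preserves the normalized counting density, plus a remainder each of whose eta-quotient summands satisfies the inequality $\sum_i r_i/\alpha_i\ge\sum_i s_i\gamma_i$ of Theorem~\ref{Cotron} and is therefore lacunary.

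The main obstacle, and the step I would spend the most care on, is the $2$-adic density bookkeeping in this dissection. A naive substitution $q\mapsto q^2$ halves the normalized density, so to reach the exact equality $\delta^{(k)}_7=\delta^{(k-1)}_7$ rather than a merely scaled relation I must verify that the even/odd split recombines so that the factors of $2$ from dissection cancel against the doubling of support produced by the lacunary multiplier $f_{14}^3$; concretely, that the odd coefficients of $F_k$ are in density-preserving correspondence with those of $F_{k-1}$, the discrepancy being supported on a set of density zero certified by Theorem~\ref{Cotron}. Once this is in place, the standard fact that two integral power series which agree modulo $2$ outside a lacunary set have the same odd density, together with the reversibility of each implication, completes the induction and yields $\delta^{(k)}_7=\delta^{(0)}_7$ for all $k\ge 0$.
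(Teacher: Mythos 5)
Your high-level framework---linking consecutive members $F_k=f_7^{7k+1}/f_1^{k+1}$ by a congruence whose error terms are lacunary by Theorem~\ref{Cotron}, and then propagating existence and equality of odd densities along the chain---is the right one, and your starting relation $F_k\equiv f_{14}^3\frac{f_7}{f_1}F_{k-1}\pmod 2$ is correct. But the proposal has a genuine gap at exactly the step you flag as the ``main obstacle.'' What makes the argument work in the paper is the Keith--Zanello identity \eqref{7regular},
\begin{align*}
\frac{f_7}{f_1}\equiv f_1^6+qf_1^2f_7^4+q^2\frac{f_7^8}{f_1^2}\pmod 2,
\end{align*}
which, upon multiplying both sides by $f_7^{7k}/f_1^{k}$, yields
\begin{align*}
\frac{f_7^{7k+1}}{f_1^{k+1}}\equiv f_1^{6-k}f_7^{7k}+qf_1^{2-k}f_7^{7k+4}+q^2\,\frac{f_7^{7(k+1)+1}}{f_1^{(k+1)+1}}\pmod 2,
\end{align*}
that is, $F_k\equiv L_k+q^2F_{k+1}\pmod 2$, where both summands of $L_k$ satisfy Cotron's inequality (since $7k/7\geq k-6$ and $(7k+4)/7\geq k-2$). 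Crucially, this relation involves only multiplication by $q^2$ (a shift of coefficients, which preserves odd density exactly) and \emph{no} dilation $q\mapsto q^2$, so the density bookkeeping you worry about never arises: $F_k$ and $F_{k+1}$ agree, up to a shift, outside a set of density zero, and the theorem follows immediately by induction in both directions along the chain.

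Your route never produces such an additive three-term congruence. Multiplying $F_{k-1}$ by $f_{14}^3\frac{f_7}{f_1}$ entangles $F_{k-1}$ with the non-lacunary series $f_7/f_1$ (whose odd density is conjecturally $1/2$), and the parity of coefficients of a product is an $\mathbb{F}_2$-convolution: there is no general principle relating the odd density of a product to the odd densities of its factors, even when one factor is lacunary. Likewise, the $2$-dissection $F_{k-1}(q)\equiv A(q^2)+qB(q^2)$ loses control of density, since existence of the odd density of $F_{k-1}$ does not force the odd densities of $A$ and $B$ to exist separately, and the dilation genuinely halves normalized densities; your hoped-for ``density-preserving index map'' would therefore have to be a pure shift, which is precisely what identity \eqref{7regular} delivers and what your manipulations do not. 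In short, the missing ingredient is identity \eqref{7regular} (or an equivalent exact congruence expressing $F_k-q^2F_{k+1}$ as a sum of Cotron-lacunary eta-quotients); without it, the central step of your induction is asserted rather than proved.
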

Next, we show that the odd density of $\frac{f^{13k+1}_{13}}{f^{k+1}_{1}}$ is same as the odd density of $13$-regular partition function. More precisely, we have the following result.
\begin{theorem}\label{thm8}
Let $\delta^{\left( k\right)}_{13}$ denote the odd density of 
\begin{equation*}
\frac{f^{13k+1}_{13}}{f^{k+1}_{1}}.
\end{equation*}
If $\delta^{\left( k\right)}_{13}$ exists for any $k\geq 0$ then $\delta^{\left( k\right)}_{13}$ exists for all $k\geq 0$, and its value is independent of $k$. 
In particular, if the odd density of $13$-regular partitions $\delta^{(0)}_{13}$ exists then all of the $\delta^{(k)}_{13}$ exist and are equal to $\delta^{(0)}_{13}$.
\end{theorem}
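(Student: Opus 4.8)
The plan is to follow the proof of Theorem \ref{thm7} essentially verbatim, replacing the prime $7$ by $13$ throughout. The prime enters only through the exponents in the eta-quotient and through the elementary reduction $f_{13}^2\equiv f_{26}\pmod2$, so each structural step should transfer with cosmetic changes. Writing $G_k(q):=\frac{f_{13}^{13k+1}}{f_1^{k+1}}$, I would first reduce the statement to a comparison of consecutive indices: it suffices to show that for every $k\geq0$ the odd density of $G_{k+1}$ exists and equals that of $G_k$ whenever either one exists. Granting this, a finite induction running upward and downward from the index at which $\delta^{(k)}_{13}$ is assumed to exist yields existence and equality for all $k$, and specializing to the base index $k=0$ identifies the common value with the odd density of $\frac{f_{13}}{f_1}=\sum_{n\geq0}b_{13}(n)q^n$, namely $\delta^{(0)}_{13}$.

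The heart of the matter is a single congruence modulo $2$ relating $G_{k+1}$ and $G_k$. One starts from the exact identity $G_{k+1}\equiv G_k\cdot\frac{f_{13}^{13}}{f_1}\pmod2$ and then rewrites the step factor using the standard reductions $f_m^2\equiv f_{2m}\pmod2$ and, from Jacobi's identity $f_m^3=\sum_{j\geq0}(-1)^j(2j+1)q^{m\,j(j+1)/2}$, the congruence $f_m^3\equiv\sum_{j\geq0}q^{m\,j(j+1)/2}\pmod2$. Since $f_{13}^4\equiv f_{52}$, one gets $f_{13}^{12}=(f_{13}^4)^3\equiv f_{52}^3\equiv\sum_{j\geq0}q^{52\,j(j+1)/2}\pmod2$, a series supported on a sparse (hence lacunary) set of exponents, so that
\begin{align*}
\frac{f_{13}^{13}}{f_1}\equiv \frac{f_{13}}{f_1}\sum_{j\geq0}q^{52\,j(j+1)/2}\pmod2.
\end{align*}
The goal of this step is to massage the resulting expression, with the help of further $2$-dissection and Theorem \ref{Cotron} to certify lacunarity of the auxiliary eta-quotient pieces, into a congruence of the form $G_{k+1}\equiv (\text{an explicit shift/dilation of }G_k)+E_k\pmod2$ in which the error $E_k$ is lacunary modulo $2$. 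The outcome is that the odd coefficients of $G_{k+1}$ are recovered from those of $G_k$ up to a discrepancy supported on a set of natural density zero.

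Finally I would invoke the elementary density-transfer principle that if two integral power series have coefficients that agree modulo $2$ outside a set of natural density zero, then the limit defining the odd density of one exists if and only if the limit for the other exists, and in that case the two limits coincide. Applying this to $G_k$ and $G_{k+1}$ gives $\delta^{(k+1)}_{13}=\delta^{(k)}_{13}$ and closes the induction. The main obstacle is precisely the construction of the congruence in the second paragraph: the naive ratio $\frac{f_{13}^{13}}{f_1}$ is \emph{not} lacunary, so one cannot simply multiply $G_k$ by it, and the work lies in performing the correct dissection that isolates genuinely lacunary contributions, verifying the inequality $\sum_i r_i/\alpha_i\geq\sum_i s_i\gamma_i$ of Theorem \ref{Cotron} for the eta-quotients that arise, and controlling how the sparse factor $\sum_{j\geq0}q^{52\,j(j+1)/2}$ interacts with $G_k$ so that the symmetric difference of the two odd-support sets is provably of density zero.
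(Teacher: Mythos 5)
Your high-level framework --- comparison of consecutive indices $G_k$ and $G_{k+1}$, Theorem \ref{Cotron} to certify lacunarity of error terms, and the elementary density-transfer principle plus induction --- is exactly the paper's strategy. But your proof has a genuine gap at precisely the point you yourself flag as ``the main obstacle'': the key congruence relating $G_k$ and $G_{k+1}$ modulo $2$ up to a lacunary error is never produced, and the manipulation you start from does not lead to it. From $G_{k+1}\equiv G_k\cdot\frac{f_{13}}{f_1}\cdot\sum_{j\geq0}q^{26j(j+1)}\pmod 2$ one cannot conclude anything about odd densities: multiplication modulo $2$ by a sparse theta series does not preserve lacunarity or odd density (the odd support of the product is only contained in a union of roughly $\sqrt{X}$ translates of the odd support of the other factor, which gives no $o(X)$ bound), and the other factor $\frac{f_{13}}{f_1}$ is conjecturally of positive odd density, so neither factor can be absorbed into a density-zero error. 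What is needed is an \emph{additive}, not multiplicative, relation between $G_k$ and $G_{k+1}$.

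The paper obtains exactly that from a known mod-$2$ dissection of Calkin et al.\ (quoted as \eqref{thm8.1}):
\begin{equation*}
\frac{f_{13}}{f_1}\equiv f^{3}_4+qf^{5}_2f_{26}+q^6f^{3}_{52}+q^7\frac{f^{7}_{26}}{f_{2}}\pmod{2}.
\end{equation*}
Multiplying through by $\frac{f_{13}^{13k}}{f_1^{k}}$ and using $f_2\equiv f_1^2$, $f_{26}\equiv f_{13}^2\pmod 2$, the last term becomes $q^{7}\frac{f_{13}^{13(k+1)+1}}{f_1^{k+2}}=q^7G_{k+1}$, while each of the first three products, e.g.\ $f_4^3\frac{f_{13}^{13k}}{f_1^k}$, satisfies the inequality of Theorem \ref{Cotron} (for instance $\frac{3}{4}+k\geq k$) and is therefore lacunary modulo $2$. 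This yields the additive relation $G_k\equiv q^7G_{k+1}+(\text{lacunary})\pmod 2$ --- note that the natural direction is the reverse of the one you aimed for, expressing $G_k$ in terms of $G_{k+1}$, which is equally good for density transfer --- and then your third paragraph finishes the proof exactly as you describe. Without this identity (or an equivalent dissection, which you would have to prove), the congruence asserted in your second paragraph remains an unproved claim, so the argument is incomplete.
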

\section{Preliminaries}
We recall some definitions and basic facts on modular forms in order to make our proofs relatively self-contained. For more details, see for example \cite{koblitz1993, ono2004}. We first define the matrix groups 
\begin{align*}
\text{SL}_2(\mathbb{Z}) & :=\left\{\begin{bmatrix}
a  &  b \\
c  &  d      
\end{bmatrix}: a, b, c, d \in \mathbb{Z}, ad-bc=1
\right\},\\
\Gamma_{0}(N) & :=\left\{
\begin{bmatrix}
a  &  b \\
c  &  d      
\end{bmatrix} \in \text{SL}_2(\mathbb{Z}) : c\equiv 0\pmod N \right\},
\end{align*}
\begin{align*}
\Gamma_{1}(N) & :=\left\{
\begin{bmatrix}
a  &  b \\
c  &  d      
\end{bmatrix} \in \Gamma_0(N) : a\equiv d\equiv 1\pmod N \right\},
\end{align*}
and 
\begin{align*}\Gamma(N) & :=\left\{
\begin{bmatrix}
a  &  b \\
c  &  d      
\end{bmatrix} \in \text{SL}_2(\mathbb{Z}) : a\equiv d\equiv 1\pmod N, ~\text{and}~ b\equiv c\equiv 0\pmod N\right\},
\end{align*}
where $N$ is a positive integer. A subgroup $\Gamma$ of $\text{SL}_2(\mathbb{Z})$ is called a congruence subgroup if $\Gamma(N)\subseteq \Gamma$ for some $N$. 
The smallest $N$ such that $\Gamma(N)\subseteq \Gamma$
is called the level of $\Gamma$. For example, $\Gamma_0(N)$ and $\Gamma_1(N)$
are congruence subgroups of level $N$. 
\par Let $\mathbb{H}:=\{z\in \mathbb{C}: \text{Im}(z)>0\}$ be the upper half of the complex plane. The group $$\text{GL}_2^{+}(\mathbb{R})=\left\{\begin{bmatrix}
a  &  b \\
c  &  d      
\end{bmatrix}: a, b, c, d\in \mathbb{R}~\text{and}~ad-bc>0\right\}$$ acts on $\mathbb{H}$ by $\begin{bmatrix}
a  &  b \\
c  &  d      
\end{bmatrix} z=\displaystyle \frac{az+b}{cz+d}$.  
We identify $\infty$ with $\displaystyle\frac{1}{0}$ and define $\begin{bmatrix}
a  &  b \\
c  &  d      
\end{bmatrix} \displaystyle\frac{r}{s}=\displaystyle \frac{ar+bs}{cr+ds}$, where $\displaystyle\frac{r}{s}\in \mathbb{Q}\cup\{\infty\}$.
This gives an action of $\text{GL}_2^{+}(\mathbb{R})$ on the extended upper half-plane $\mathbb{H}^{\ast}=\mathbb{H}\cup\mathbb{Q}\cup\{\infty\}$. 
Suppose that $\Gamma$ is a congruence subgroup of $\text{SL}_2(\mathbb{Z})$. A cusp of $\Gamma$ is an equivalence class in $\mathbb{P}^1=\mathbb{Q}\cup\{\infty\}$ under the action of $\Gamma$.
\par The group $\text{GL}_2^{+}(\mathbb{R})$ also acts on functions $f: \mathbb{H}\rightarrow \mathbb{C}$. In particular, suppose that $\gamma=\begin{bmatrix}
a  &  b \\
c  &  d      
\end{bmatrix}\in \text{GL}_2^{+}(\mathbb{R})$. If $f(z)$ is a meromorphic function on $\mathbb{H}$ and $\ell$ is an integer, then define the slash operator $|_{\ell}$ by 
$$(f|_{\ell}\gamma)(z):=(\text{det}~{\gamma})^{\ell/2}(cz+d)^{-\ell}f(\gamma z).$$
\begin{definition}
	Let $\Gamma$ be a congruence subgroup of level $N$. A holomorphic function $f: \mathbb{H}\rightarrow \mathbb{C}$ is called a weakly modular form with integer weight $\ell$ on $\Gamma$ if the following hold:
	\begin{enumerate}
		\item We have $$f\left(\displaystyle \frac{az+b}{cz+d}\right)=(cz+d)^{\ell}f(z)$$ for all $z\in \mathbb{H}$ and all $\begin{bmatrix}
		a  &  b \\
		c  &  d      
		\end{bmatrix} \in \Gamma$.
		\item If $\gamma\in \text{SL}_2(\mathbb{Z})$, then there exists an integer $n_{\gamma}$ such that  $(f|_{\ell}\gamma)(z)$ has a Fourier expansion of the form $$(f|_{\ell}\gamma)(z)=\displaystyle\sum_{n\geq n_{\gamma}}a_{\gamma}(n)q_N^n,$$
		with $a_{\gamma}(n_{\gamma})\neq 0$. Here $q_N:=e^{2\pi iz/N}$.
	\end{enumerate}
\end{definition}
If $n_{\gamma}\geq 0$ for all $\gamma\in \text{SL}_2(\mathbb{Z})$, the we call $f$ a modular form of weight $\ell$ on $\Gamma$. 
For a positive integer $\ell$, the complex vector space of all weakly modular forms (resp. modular forms) of weight $\ell$ with respect to a congruence subgroup $\Gamma$ is denoted by $M^{!}_{\ell}(\Gamma)$ (resp. $M_{\ell}(\Gamma)$).
\begin{definition}\cite[Definition 1.15]{ono2004}
	If $\chi$ is a Dirichlet character modulo $N$, then we say that $f\in M^{!}_{\ell}(\Gamma_1(N))$ has Nebentypus character $\chi$ if
	$$f\left( \frac{az+b}{cz+d}\right)=\chi(d)(cz+d)^{\ell}f(z)$$ for all $z\in \mathbb{H}$ and all $\begin{bmatrix}
	a  &  b \\
	c  &  d      
	\end{bmatrix} \in \Gamma_0(N)$.
\end{definition}	
	 The space of such weakly modular forms (resp. modular forms) is denoted by $M^{!}_{\ell}(\Gamma_0(N), \chi)$ (resp. $M_{\ell}(\Gamma_0(N), \chi)$). The relationship between $M^{!}_{\ell}(\Gamma_1(N))$ and these subspaces is given by the following decomposition:
 $$M_{\ell}^{!}\left(\Gamma_{1}(N)\right)=\bigoplus_{\chi} M_{\ell}^{!}\left(\Gamma_{0}(N), \chi\right),$$
 where the direct sum runs over all Dirichlet characters.
In this paper, the relevant modular forms are those that arise from eta-quotients. Recall that the Dedekind eta-function $\eta(z)$ is defined by
\begin{align*}
\eta(z):=q^{1/24}(q;q)_{\infty}=q^{1/24}\prod_{n=1}^{\infty}(1-q^n),
\end{align*}
where $q:=e^{2\pi iz}$ and $z\in \mathbb{H}$. A function $f(z)$ is called an eta-quotient if it is of the form
\begin{align*}
f(z)=\prod_{\delta\mid N}\eta(\delta z)^{r_\delta},
\end{align*}
where $N$ is a positive integer and $r_{\delta}$ is an integer. We now recall two theorems from \cite[p. 18]{ono2004} which will be used to prove our results.
\begin{theorem}\cite[Theorem 1.64]{ono2004}\label{thm_ono1} If $f(z)=\prod_{\delta\mid N}\eta(\delta z)^{r_\delta}$ 
	is an eta-quotient such that $\ell=\frac{1}{2}\sum_{\delta\mid N}r_{\delta}\in \mathbb{Z}$, 
	$$\sum_{\delta\mid N} \delta r_{\delta}\equiv 0 \pmod{24}$$ and
	$$\sum_{\delta\mid N} \frac{N}{\delta}r_{\delta}\equiv 0 \pmod{24},$$
	then $f(z)$ satisfies $$f\left( \frac{az+b}{cz+d}\right)=\chi(d)(cz+d)^{\ell}f(z)$$
	for every  $\begin{bmatrix}
	a  &  b \\
	c  &  d      
	\end{bmatrix} \in \Gamma_0(N)$. Here the character $\chi$ is defined by $\chi(d):=\left(\frac{(-1)^{\ell} s}{d}\right)$, where $s:= \prod_{\delta\mid N}\delta^{r_{\delta}}$. 
\end{theorem}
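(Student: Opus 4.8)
The plan is to derive the transformation law from the classical behaviour of the Dedekind eta-function under the full modular group. The only analytic input is the Dedekind--Rademacher formula: for every $\gamma=\begin{bmatrix} a & b \\ c & d\end{bmatrix}\in \mathrm{SL}_2(\mathbb{Z})$ with $c>0$ one has $\eta(\gamma z)=\varepsilon(\gamma)(cz+d)^{1/2}\eta(z)$, where $\varepsilon(\gamma)$ is an explicit $24$-th root of unity, expressible as a Jacobi symbol built from the lower-row entries $c,d$ times a phase $\exp\!\bigl(\tfrac{\pi i}{12}\Phi(a,b,c,d)\bigr)$ with $\Phi$ integer-valued. Everything after this is bookkeeping. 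First I would fix $\gamma=\begin{bmatrix} a & b \\ c & d\end{bmatrix}\in\Gamma_0(N)$, so $N\mid c$, and reduce the action on each factor $\eta(\delta z)$ to an $\mathrm{SL}_2(\mathbb{Z})$-action; the degenerate cases $c=0$ and $c<0$ are handled routinely via $\eta(z+1)=e^{\pi i/12}\eta(z)$ and the replacement $\gamma\mapsto-\gamma$.

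The reduction rests on a conjugation trick. For each $\delta\mid N$ a direct computation gives
\[
\delta\cdot\frac{az+b}{cz+d}=\frac{a(\delta z)+b\delta}{(c/\delta)(\delta z)+d},
\]
so setting $\gamma_\delta:=\begin{bmatrix} a & b\delta \\ c/\delta & d\end{bmatrix}$ we have $\eta(\delta\,\gamma z)=\eta(\gamma_\delta(\delta z))$. Because $\delta\mid N\mid c$ the entry $c/\delta$ is an integer and $\det\gamma_\delta=ad-bc=1$, so $\gamma_\delta\in\mathrm{SL}_2(\mathbb{Z})$. Applying the eta transformation to $\gamma_\delta$ acting on $\delta z$ gives
\[
\eta(\delta\,\gamma z)=\varepsilon(\gamma_\delta)\bigl((c/\delta)(\delta z)+d\bigr)^{1/2}\eta(\delta z)=\varepsilon(\gamma_\delta)(cz+d)^{1/2}\eta(\delta z).
\]
Taking the product over $\delta\mid N$ weighted by $r_\delta$ then yields
\[
f(\gamma z)=\prod_{\delta\mid N}\eta(\delta\,\gamma z)^{r_\delta}=\Bigl(\prod_{\delta\mid N}\varepsilon(\gamma_\delta)^{r_\delta}\Bigr)(cz+d)^{\frac12\sum_{\delta}r_\delta}f(z)=M(\gamma)\,(cz+d)^{\ell}f(z),
\]
where $M(\gamma):=\prod_{\delta}\varepsilon(\gamma_\delta)^{r_\delta}$ and the exponent is an integer by the hypothesis $\ell\in\mathbb{Z}$. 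It remains only to identify $M(\gamma)$ with $\chi(d)=\left(\frac{(-1)^\ell s}{d}\right)$, where $s=\prod_\delta\delta^{r_\delta}$.

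The final and most delicate step is this evaluation of $M(\gamma)$. I would split each $\varepsilon(\gamma_\delta)$ into its Jacobi-symbol part and its phase $\exp(\tfrac{\pi i}{12}\Phi_\delta)$, treating separately the branches $c/\delta$ odd and $c/\delta$ even demanded by the multiplier formula. Collecting the phases, the total exponent becomes a $\mathbb{Z}$-linear combination of $\sum_\delta\delta r_\delta$ and $\sum_\delta(N/\delta)r_\delta$ (the latter entering through $c/\delta=(c/N)(N/\delta)$ and $N\mid c$), up to terms governed by $\ell\in\mathbb{Z}$. The two congruences $\sum_\delta\delta r_\delta\equiv0$ and $\sum_\delta(N/\delta)r_\delta\equiv0\pmod{24}$ are exactly what push this exponent into $24\mathbb{Z}$, so the phase contributes trivially; this is already visible in the case $c=0$, $\gamma z=z+b$, where $f(\gamma z)=\exp\bigl(\tfrac{\pi i b}{12}\sum_\delta\delta r_\delta\bigr)f(z)=f(z)$. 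The surviving Jacobi symbols are then reorganised by quadratic reciprocity and multiplicativity of the Kronecker symbol into the single symbol $\left(\frac{(-1)^\ell s}{d}\right)$.

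The main obstacle is precisely this last computation: correctly tracking the parity-dependent branches of the eta-multiplier and applying reciprocity consistently, so that the two divisibility-by-$24$ hypotheses annihilate every phase while the quadratic symbols coalesce into the asserted Nebentypus $\chi$. This is elementary but intricate number theory; once the conjugation reduction $\gamma\mapsto\gamma_\delta$ is in place, it constitutes essentially the entire content of the theorem.
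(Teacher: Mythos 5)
The paper gives no proof of this statement for your attempt to be compared against: it is recalled verbatim from Ono's book, where it is likewise stated without proof (Ono cites the computations of Gordon--Hughes and Newman). Measured against that standard argument, your outline is exactly the right one, and the steps you do make explicit check out. The conjugation step is correct: for $\delta\mid N\mid c$ the matrix $\gamma_\delta=\begin{bmatrix} a & b\delta\\ c/\delta & d\end{bmatrix}$ is integral with determinant $ad-bc=1$, giving $f(\gamma z)=M(\gamma)(cz+d)^{\ell}f(z)$ with $M(\gamma)=\prod_{\delta\mid N}\varepsilon(\gamma_\delta)^{r_\delta}$. Your description of the endgame is accurate rather than merely hopeful, and one observation makes the two-branch bookkeeping lighter than you suggest: since $\gcd(c,d)=1$, either $d$ is odd, or $d$ is even and then $c$ is odd, in which case every $\delta$ and every $c/\delta$ is odd; so for a fixed $\gamma$ a single branch of the multiplier formula applies uniformly to all the $\gamma_\delta$. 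In the $d$-odd branch (say with Knopp's normalization of $\varepsilon$), the accumulated phase is a combination of $bd\sum_\delta \delta r_\delta$, of $(a+d)c\sum_\delta r_\delta/\delta=(a+d)(c/N)\sum_\delta (N/\delta)r_\delta$ together with similar terms $bdc^{2}\sum_\delta r_\delta/\delta$ and $3dc\sum_\delta r_\delta/\delta$, all annihilated modulo $24$ by your two hypotheses exactly as you claim, plus a leftover $3(d-1)\sum_\delta r_\delta=6(d-1)\ell$ contributing $(-1)^{\ell(d-1)/2}=\left(\frac{-1}{d}\right)^{\ell}$; and since $\gcd(\delta,d)=1$ (because $\delta\mid c$ and $\gcd(c,d)=1$), complete multiplicativity of the Jacobi symbol in the numerator gives
\begin{align*}
\prod_{\delta\mid N}\left(\frac{c/\delta}{d}\right)^{r_\delta}=\left(\frac{c}{d}\right)^{2\ell}\prod_{\delta\mid N}\left(\frac{\delta}{d}\right)^{r_\delta}=\left(\frac{s}{d}\right),
\end{align*}
so that $M(\gamma)=\left(\frac{(-1)^{\ell}s}{d}\right)=\chi(d)$ as asserted. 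Your treatments of $c=0$ and of $\gamma\mapsto-\gamma$ are also consistent: with the Kronecker convention $\left(\frac{a}{-1}\right)=\operatorname{sign}(a)$ one gets $\chi(-d)=(-1)^{\ell}\chi(d)$, exactly offsetting the sign change of $(cz+d)^{\ell}$. The only shortfall --- which you flag yourself --- is that the decisive multiplier computation is sketched rather than executed, and care is needed with symbol conventions for negative $d$; but the mechanism, the precise role of each $24$-divisibility hypothesis, and the reciprocity step are all correctly located, so this is a correct outline of the literature proof of the cited theorem rather than a gap in ideas.
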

Suppose that $f$ is an eta-quotient satisfying the conditions of Theorem \ref{thm_ono1} and that the associated weight $\ell$ is a positive integer. If $f(z)$ is holomorphic at all of the cusps of $\Gamma_0(N)$, then $f(z)\in M_{\ell}(\Gamma_0(N), \chi)$. 
The following theorem gives the necessary criterion for determining orders of an eta-quotient at cusps.
\begin{theorem}\cite[Theorem 1.65]{ono2004}\label{thm_ono2}
	Let $c, d$ and $N$ be positive integers with $d\mid N$ and $\gcd(c, d)=1$. If $f$ is an eta-quotient satisfying the conditions of Theorem \ref{thm_ono1} for $N$, then the 
	order of vanishing of $f(z)$ at the cusp $\frac{c}{d}$ 
	is $$\frac{N}{24}\sum_{\delta\mid N}\frac{\gcd(d,\delta)^2r_{\delta}}{\gcd(d,\frac{N}{d})d\delta}.$$
\end{theorem}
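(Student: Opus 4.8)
The plan is to reduce everything modulo $2$ and to track the single ``transfer factor'' that carries one member of the family to the next. Writing $A_k:=\frac{f_{13}^{13k+1}}{f_1^{k+1}}$, one has the exact factorization $A_k=A_0\cdot T^{k}$ with $T:=\frac{f_{13}^{13}}{f_1}$, so that $A_{k+1}\equiv T\cdot A_k\pmod 2$. Consequently it suffices to prove the single inductive step: the odd density $\delta^{(k)}_{13}$ of $A_k$ exists if and only if $\delta^{(k+1)}_{13}$ of $A_{k+1}$ exists, and in that case the two values coincide. Since $\delta^{(0)}_{13}$ is by definition the odd density of the $13$-regular partition function, iterating this step in both directions gives the full theorem. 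I would run the identical argument with $13$ replaced by $7$ to recover Theorem \ref{thm7}, so the two results are really a single computation.

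First I would realize the transfer factor as a genuine holomorphic modular form. Setting $g(z):=\eta(z)^{-1}\eta(13z)^{13}=q^{7}T$, Theorem \ref{thm_ono1} applies with $N=13$, $r_1=-1$, $r_{13}=13$: the weight $\tfrac12(r_1+r_{13})=6$ is an integer, $\sum_{\delta\mid 13}\delta r_\delta=-1+169=168\equiv0\pmod{24}$, and $\sum_{\delta\mid 13}\tfrac{13}{\delta}r_\delta=-13+13\equiv0\pmod{24}$, so $g$ transforms with weight $6$ and a quadratic Nebentypus $\chi$ on $\Gamma_0(13)$. Theorem \ref{thm_ono2} then certifies holomorphy at the two cusps of $\Gamma_0(13)$: at the cusp with $d=1$ the order of vanishing is $\tfrac{13}{24}(-1+1)=0$, while at the cusp with $d=13$ it is $\tfrac{1}{24}(-1+169)=7$, both nonnegative. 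Hence $g\in M_6(\Gamma_0(13),\chi)$, which places the problem squarely inside the theory of modular forms modulo $2$ and lets me use the same circle of ideas (Serre-type nondivisibility) that underlies Theorem \ref{Cotron}.

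The heart of the matter is to show that multiplication by $T$ preserves the odd density. Here I would use the mod-$2$ relations $f_a^2\equiv f_{2a}$ and hence $f_{13}^{12}\equiv f_{26}^{6}$ and $f_1^{-1}\equiv f_1 f_2^{-1}$ to strip the even parts of the exponents and to recast $A_{k+1}$ as an explicit $2$-dissection assembled from $A_k$ and the modular form $g$, so that the ``new'' contribution introduced at each step is governed by a holomorphic object whose coefficients I can control mod $2$. The main obstacle, and the step that genuinely needs care, is the density bookkeeping: extracting squares through $f_a^2\equiv f_{2a}$ forces substitutions $q\mapsto q^{2}$ which a priori halve densities, so one must show these halvings are exactly compensated, i.e. that the odd support of $T\cdot A_k$ is carried bijectively onto the odd support of $A_k$ up to a set of density zero. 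I expect this to emerge from the self-similar structure of the family together with the modular realization $q^{7}T=g$, precisely as in the proof of Theorem \ref{thm7}; pinning down this density-preserving correspondence cleanly is where the real work lies, and it is what simultaneously yields the existence transfer and the equality $\delta^{(k+1)}_{13}=\delta^{(k)}_{13}$.
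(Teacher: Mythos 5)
Your proposal does not address the statement in question. The statement is Theorem \ref{thm_ono2}, the classical formula of Ono (Theorem 1.65 of \cite{ono2004}) for the order of vanishing of an eta-quotient $f(z)=\prod_{\delta\mid N}\eta(\delta z)^{r_\delta}$ at a cusp $\frac{c}{d}$ of $\Gamma_0(N)$. A proof of that statement would have to work with the transformation law of the Dedekind eta function under $\mathrm{SL}_2(\mathbb{Z})$ (in particular $\eta(-1/z)=\sqrt{-iz}\,\eta(z)$), choose a matrix carrying $\infty$ to $\frac{c}{d}$, expand each factor $\eta(\delta z)$ in the local uniformizer at that cusp, and sum the resulting leading exponents --- which is where the quantity $\frac{N}{24}\sum_{\delta\mid N}\frac{\gcd(d,\delta)^2 r_\delta}{\gcd(d,\frac{N}{d})\,d\,\delta}$ arises. (In the paper this result is quoted as a preliminary, not proved; it is used only as a tool, e.g.\ to check holomorphy of $F_{p,i}$, $G_{p,i}$, $H_{11,i}$ at the cusps.) What you have written instead is a proof sketch for Theorem \ref{thm8}, the odd-density result for $\frac{f_{13}^{13k+1}}{f_1^{k+1}}$; you even invoke Theorem \ref{thm_ono2} as a known ingredient to certify holomorphy of your form $g$, so your argument presupposes the very statement you were asked to prove.

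Even judged as an argument for Theorem \ref{thm8}, the central step is missing, and you acknowledge as much: the exact factorization $A_k=A_0\cdot T^k$ with $T=\frac{f_{13}^{13}}{f_1}$ is correct, but multiplication of power series modulo $2$ does not in general preserve odd density (multiplying by a lacunary series can destroy it, and multiplying by a generic series scrambles the odd support unpredictably), and the hoped-for ``density-preserving correspondence'' is precisely the content one must prove, not a bookkeeping detail. The paper's route is different and avoids this issue entirely: it starts from the $2$-dissection-type congruence $\frac{f_{13}}{f_1}\equiv f_4^3+qf_2^5f_{26}+q^6f_{52}^3+q^7\frac{f_{26}^7}{f_2}\pmod{2}$ of Calkin et al., iterates it to produce, for each $k$, a congruence expressing $\frac{f_{13}^{13k+1}}{f_1^{k+1}}$ as a sum in which all terms but one satisfy the hypothesis of Theorem \ref{Cotron} and are therefore lacunary modulo $2$, and the remaining term is (up to a power of $q$ and a dilation that is tracked explicitly) the next member of the family; equality of densities then follows because adding a density-zero set does not change a density. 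If you want to salvage your transfer-operator idea, you would need to replace ``multiplication by $T$ preserves odd density'' with exactly this kind of dissection identity, at which point you have reproduced the paper's proof rather than shortcut it.
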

We now recall a result of Sturm \cite{Sturm1984} which gives a criterion to test whether two modular forms are congruent modulo a given prime.
\begin{theorem}\label{Sturm}
	Let $p$ be a prime number, and $f(z)=\sum_{n=n_0}^\infty a(n)q^n$ and $g(z)=\sum_{n=n_1}^\infty b(n)q^n$ be modular forms of weight $k$ for $\Gamma_0(N)$ of characters $\chi$ and $\psi$, respectively, where $n_0, n_1\geq 0$. If either $\chi=\psi$ and 
	\begin{align*}
	a(n)\equiv b(n)\pmod p~~ \text{for all}~~ n\leq \frac{kN}{12}\prod_{d~~ \text{prime};~~ d|N}\left(1+\frac{1}{d}\right),
	\end{align*}
	or $\chi\neq\psi$ and 
	\begin{align*}
	a(n)\equiv b(n)\pmod p~~ \text{for all}~~ n\leq \frac{kN^2}{12}\prod_{d~~ \text{prime};~~ d|N}\left(1-\frac{1}{d^2}\right),
	\end{align*}
	then $f(z)\equiv g(z)\pmod p$~~ $(i.e.,~~a(n)\equiv b(n)\pmod p~~\text{for all}~~n)$.
\end{theorem}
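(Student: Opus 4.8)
The plan is to deduce the stated criterion from a single \emph{vanishing bound}: if $h=\sum_{n\geq 0}c(n)q^n$ is a modular form of weight $k$ on a congruence subgroup $\Gamma$ with $p$-integral Fourier coefficients, and $c(n)\equiv 0\pmod p$ for all $n$ up to $\frac{k}{12}[\mathrm{SL}_2(\mathbb Z):\Gamma]$, then $h\equiv 0\pmod p$ identically. Granting this, the theorem follows by taking $h=f-g$ and splitting into the two cases. When $\chi=\psi$ we have $h\in M_k(\Gamma_0(N),\chi)\subseteq M_k(\Gamma_0(N))$, and since $[\mathrm{SL}_2(\mathbb Z):\Gamma_0(N)]=N\prod_{d\mid N,\ d\ \mathrm{prime}}(1+\frac1d)$, the bound $\frac{k}{12}[\mathrm{SL}_2(\mathbb Z):\Gamma_0(N)]$ is exactly the first displayed bound. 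When $\chi\neq\psi$ the two forms lie in different character eigenspaces, so I would only use the weaker membership $h\in M_k(\Gamma_1(N))$; here $[\mathrm{SL}_2(\mathbb Z):\Gamma_1(N)]=N^2\prod_{d\mid N,\ d\ \mathrm{prime}}(1-\frac1{d^2})$, and $\frac{k}{12}$ times this index is precisely the second displayed bound. Thus the two regimes of the theorem correspond exactly to applying the vanishing bound on $\Gamma_0(N)$ and on $\Gamma_1(N)$.

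To set up the vanishing bound I would first normalize. Write $m=[\mathrm{SL}_2(\mathbb Z):\Gamma]$. Clearing denominators, I may assume the coefficients of $h$ lie in the ring of integers of a number field containing the $N$-th roots of unity (needed for the $q$-expansions at the remaining cusps), and I fix a prime $\mathfrak p$ above $p$. Supposing $h\not\equiv 0\pmod{\mathfrak p}$, after scaling I may assume some coefficient is a $\mathfrak p$-adic unit; let $\nu(h)$ be the order of vanishing at $\infty$ of the reduction $\widetilde h\in\overline{\mathbb F}_p[[q]]$, i.e.\ the least $n$ with $\mathfrak p\nmid c(n)$. The entire content is then to show $\nu(h)\leq\frac{km}{12}$; since the hypothesis makes $c(n)\equiv 0\pmod{\mathfrak p}$ for all $n\leq\frac{km}{12}$, this forces $\nu(h)>\frac{km}{12}$, a contradiction, so in fact $h\equiv 0\pmod{\mathfrak p}$, hence mod $p$.

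To bound $\nu(h)$ I would pass to level one by a norm construction. Form $\mathrm{Nm}(h)=\prod_{\gamma\in\Gamma\backslash \mathrm{SL}_2(\mathbb Z)}\bigl(h|_k\gamma\bigr)$; up to a harmless determinant twist and, if necessary, raising to a bounded power to kill the residual character (recall $\mathrm{SL}_2(\mathbb Z)^{\mathrm{ab}}\cong\mathbb Z/12$), this is a modular form on $\mathrm{SL}_2(\mathbb Z)$ of weight $km$. Its order of vanishing at the cusp $\infty$ equals $\sum_{\mathfrak a}e_{\mathfrak a}\,\nu_{\mathfrak a}(h)$, a weighted sum over the cusps $\mathfrak a$ of $\Gamma$ with widths $e_{\mathfrak a}$; since $h$ is holomorphic every term is nonnegative, and the width of $\infty$ is $1$ for $\Gamma_0(N)$ and $\Gamma_1(N)$, so this order is at least $\nu_\infty(h)=\nu(h)$. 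The level-one valence formula then caps it: a nonzero weight-$w$ form on $\mathrm{SL}_2(\mathbb Z)$ vanishes at $\infty$ to order at most $w/12$. Applying this with $w=km$ yields $\nu(h)\leq km/12$, as required.

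The main obstacle is that this valence argument must be run \emph{modulo} $\mathfrak p$ rather than over $\mathbb C$, since $\widetilde h$ is a reduction and complex zero-counting does not transfer verbatim. I would address this by invoking the valence formula for mod-$p$ modular forms of level one, viewing $\widetilde{\mathrm{Nm}(h)}$ as a section of $\omega^{\otimes km}$ on the modular curve $X(1)_{\overline{\mathbb F}_p}$, whose total degree of zeros is $km/12$; equivalently, one may use the explicit graded structure of level-one mod-$p$ forms together with the Hasse invariant to adjust weights. Three delicate points need care: (i) the $\mathfrak p$-unit normalization must guarantee that reduction neither annihilates the leading coefficient nor lowers $\nu(h)$; (ii) when $p\mid N$ the curve $X_0(N)$ has bad reduction at $p$, which is exactly why I route everything through level one, where reduction is good at every prime; and (iii) one must confirm that the norm's expansion at $\infty$ genuinely detects $\nu(h)$ with the stated multiplicities. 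Once these are settled, the two index computations of the first paragraph complete both cases of the theorem.
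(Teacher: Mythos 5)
The paper does not actually prove this statement: it is quoted as a known result of Sturm \cite{Sturm1984}, so your attempt must be measured against Sturm's own argument, whose architecture (norm down to level one, then a level-one vanishing bound, with the two displayed bounds coming from the indices of $\Gamma_0(N)$ and $\Gamma_1(N)$) you have correctly reproduced. But there is a genuine gap at the pivot of your contradiction. The level-one mod-$p$ valence bound $\mathrm{ord}_\infty \leq w/12$ applies only to forms whose \emph{reduction is nonzero}, and your chain ``$\mathrm{ord}(\mathrm{Nm}(h)\bmod\mathfrak p)\geq \nu(h)>km/12$, contradiction'' silently assumes $\mathrm{Nm}(h)\not\equiv 0\pmod{\mathfrak p}$. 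A priori some conjugate factor $h|_k\gamma$, $\gamma\notin\Gamma$, could reduce to zero modulo (an extension of) $\mathfrak p$ even though $h$ itself does not --- its coefficients lie only in $\mathcal O_K[\zeta_N,1/N]$, not in $\mathcal O_K$ --- and then $\mathrm{Nm}(h)\equiv 0\pmod{\mathfrak p}$ carries no information about $\nu(h)$, so no contradiction is reached. Ruling this out is the real content of the theorem: one needs either the $q$-expansion principle together with the irreducibility of the modular curve in characteristic $p$ (vanishing of the reduction along the formal neighborhood of one cusp forces vanishing of the section, hence of the expansions at all cusps), or Sturm's own explicit manipulation; this input, which is also precisely where a hypothesis of the shape $p\nmid N$ enters, is absent from your sketch, and your item (iii) does not substitute for it.

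Relatedly, your point (ii) misdiagnoses the $p\mid N$ difficulty: the obstruction is not bad reduction of the \emph{target} of the norm (indeed $X(1)$ is harmless), but the \emph{formation} of the norm, since when $p\mid N$ the denominators $1/N$ in the coefficients of the conjugates $h|_k\gamma$ destroy $\mathfrak p$-integrality, so already the inequality $\mathrm{ord}(\mathrm{Nm}(h)\bmod\mathfrak p)\geq\nu(h)$ fails at the first step; handling $p\mid N$ requires a separate descent (e.g.\ passing to level $N/p$ at the cost of weight, via multiplication by a suitable Eisenstein series or a trace argument), not routing through level one. Two smaller slips are repairable: $M_k(\Gamma_0(N),\chi)\not\subseteq M_k(\Gamma_0(N))$ for nontrivial $\chi$, though your own device of killing the residual $\mathbb Z/12$ character by raising to a bounded power (which scales weight and vanishing order equally) does legitimately make $[\mathrm{SL}_2(\mathbb Z):\Gamma_0(N)]$ the relevant index in the $\chi=\psi$ case; and for $p\in\{2,3\}$ the appeal to sections of $\omega^{\otimes km}$ on $X(1)_{\overline{\mathbb F}_p}$ is problematic at the elliptic points, so the level-one mod-$p$ step should instead be run through the integral echelonized (Miller) basis of $M_w(\mathrm{SL}_2(\mathbb Z))$, which works uniformly in $p$ --- an alternative you gesture at but do not carry out.
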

\par We next recall the definition of Hecke operators.
Let $m$ be a positive integer and $f(z) = \sum_{n=0}^{\infty} a(n)q^n \in M_{\ell}(\Gamma_0(N),\chi)$. Then the action of Hecke operator $T_m$ on $f(z)$ is defined by 
\begin{align*}
f(z)|T_m := \sum_{n=0}^{\infty} \left(\sum_{d\mid \gcd(n,m)}\chi(d)d^{\ell-1}a\left(\frac{nm}{d^2}\right)\right)q^n.
\end{align*}
In particular, if $m=p$ is prime, we have 
\begin{align*}
f(z)|T_p := \sum_{n=0}^{\infty} \left(a(pn)+\chi(p)p^{\ell-1}a\left(\frac{n}{p}\right)\right)q^n.
\end{align*}
We take by convention that $a(n/p)=0$ whenever $p \nmid n$.
If $f$ is an $\eta$-quotient with the properties listed in Theorem \ref{thm_ono1}, and $p|s$ (here $s$ is as defined in Theorem \ref{thm_ono1}), then $\chi(p)=0$ so that the latter term vanishes. In this case, we have the factorization property that
\begin{align*}
\left(f\cdot\sum_{n=0}^\infty g(n)q^{pn}\right)|T_p=\left(\sum_{n=0}^\infty a(pn)q^{n}\right)\left(\sum_{n=0}^\infty g(n)q^{n}\right).
\end{align*}
\section{Proof of Theorem \ref{thm1}}
\begin{proof}
	We first recall the following identity from \cite[(3.2)]{xia_yao}:
	\begin{align}\label{thmnew}
\sum_{n=0}^{\infty}b_9(2n+1)q^{n}\equiv\frac{f_1f^{2}_9}{f_3}\pmod{2}.
\end{align}
For a given prime $p$, consider
 $$F_{p,1}(z):= \frac{\eta(z)\eta^2(9z)\eta(pz)\eta^2(3pz)\eta(9pz)}{\eta(3z)}$$
and $$F_{p,2}(z):= \frac{\eta(z)\eta^2(3z)\eta(9z)\eta(pz)\eta^2(9pz)}{\eta(3pz)}.$$
By Theorems \ref{thm_ono1} and \ref{thm_ono2}, we find that $F_{p,1}(z)$ and $F_{p,2}(z)$ are modular forms of weight $3$, level $27p$ and the associated character is $\chi_0(\bullet)=(\frac{-3p^2}{\bullet})$. 
For primes $p\equiv-1 \pmod9$, we have $\alpha=\frac{2p-1}{3}$ and $\beta=\alpha$ since $\lfloor \frac{2p}{3}\rfloor=\lfloor \frac{2p-1}{3}\rfloor$. \\
From \eqref{thmnew}, we have
\begin{align*}
F_{p,1}(z)\equiv q^{\alpha+1}\left(\sum_{n=0}^{\infty}b_9(2n+1)q^{n}\right) f_pf^{2}_{3p}f_{9p}\pmod{2}
\end{align*}
and
\begin{align*}
F_{p,2}(z)\equiv q\left(q^{\beta}\sum_{n=0}^{\infty}b_9(2n+1)q^{pn} \right) f_1f^{2}_{3}f_{9}\pmod{2}.
\end{align*}	
 Applying Hecke operator $T_p$ on $F_{p,1}(z)$, we find that
\begin{equation*}
F_{p,1}(z)|T_p\equiv q\left(\sum_{n=0}^{\infty}b_9(2pn+\alpha)q^{n} \right)f_1f^{2}_3f_9\pmod{2}.
\end{equation*}
Since the Hecke operator is an endomorphism on $M_{3}\left(\Gamma_{0}(27p), \chi_0\right)$, we obtain $F_{p,1}(z)|T_{p}\in M_{3}\left(\Gamma_{0}(27p), \chi_0\right)$. 
By Theorem \ref{Sturm}, the Sturm bound for the space $M_{3}\left(\Gamma_{0}(27p), \chi_0\right)$ is $9(p+1)$. For each prime $p\in\left\lbrace53, 71, 89\right\rbrace$, we wish to verify the congruence
\begin{align*}
q\left(\sum_{n=0}^{\infty}b_9(2pn+\alpha)q^{n} \right)f_1f^{2}_3f_9\equiv q^{\alpha+1}\frac{f_pf^{2}_{9p}}{f_{3p}}f_1f^{2}_{3}f_{9}\pmod 2.
\end{align*}
The coefficient of $q^{9(p+1)}$ on the left side involves the value $b_9(18p(p+1)+\alpha)$; thus, $f_9/f_1$ must be expanded at least that far, and the product on the right side must be constructed up to the $q^{9(p+1)}$ terms. 
Finally, expansion with a calculation package such as $Sage$ confirms that all coefficients up to the desired bound are congruent modulo $2$, and we establish the theorem for these three primes.
\par For primes $p\equiv1 \pmod9$, we have $\alpha=\frac{4p-1}{3}$ and $\beta=\frac{\alpha-1}{2}$ since $\lfloor \frac{2p}{3}\rfloor=\lfloor \frac{2p-2}{3}\rfloor$.
Let
$$
G_{p,1}(z):=\frac{\eta(z)\eta^2(9z)\eta^8(pz)}{\eta(3z)}
$$
and
$$
G_{p,2}(z):=\frac{\eta^8(z)\eta^2(9pz)\eta(pz)}{\eta(3pz)}.
$$
Again by Theorems \ref{thm_ono1} and \ref{thm_ono2}, we find that $G_{p,1}(z)$ and $G_{p,2}(z)$ are modular forms of weight $5$, level $27p$ and the associated character is $\chi_1(\bullet)=(\frac{-3p^2}{\bullet})$. \\
From \eqref{thmnew}, we have
\begin{align*}
G_{p,1}(z)\equiv q^{(\alpha+3)/4}\left(\sum_{n=0}^{\infty}b_9(2n+1)q^{n}\right) f^{8}_{p}\pmod{2}
\end{align*}
and
\begin{align*}
G_{p,2}(z)\equiv q\left(q^{\beta}\sum_{n=0}^{\infty}b_9(2n+1)q^{pn} \right) f^{8}_{1}\pmod{2}.
\end{align*}
 Applying Hecke operator $T_p$ on $G_{p,1}(z)$ gives an another element of $M_5\left(\Gamma_0(27p),\chi_1\right)$:
$$
G_{p,2}(z)|T_p\equiv q\left(\sum_{n=0}^{\infty}b_9(2pn+\alpha)q^{n} \right)f^{8}_1\pmod{2}.
$$
By Theorem \ref{Sturm}, the Sturm bound for this space of modular forms is $15(p+1)$. For the prime $p=73$, we wish to verify the congruence
\begin{align*}
q\left(\sum_{n=0}^{\infty}b_9(2pn+\alpha)q^{n} \right)f^{8}_1\equiv q^{(\alpha+1)/2}\frac{f_pf^{2}_{9p}}{f_{3p}}f^{8}_{1}\pmod 2.
\end{align*}
In this case, Sturm bound is $15(p+1)$. Using $Sage$, we verify that all coefficients up to the desired bound are congruent modulo $2$ for the prime $p=73$. This completes the proof of the theorem.
\end{proof}
\section{Proof of Theorem \ref{thm2}}
\begin{proof}
Taking $t=19$ in \eqref{gen_fun}, we obtain
\begin{align}\label{thm2.1}
\sum_{n=0}^{\infty}b_{19}(n)q^n=\frac{f_{19}}{f_1}.
\end{align}
	Let $$H_{11,1}(z):= \frac{\eta(19z)\eta^{126}(121z)}{\eta(z)}$$
	and $$H_{11,2}(z):= \frac{\eta(209z)\eta^{126}(11z)}{\eta(11z)}.$$
	Using Theorems \ref{thm_ono1} and \ref{thm_ono2}, we find that $H_{11,1}(z)$ and $H_{11,2}(z)$ are modular forms of weight $63$, level $2299$ and the associated character is $\chi_2(\bullet)=(\frac{-19\times11^{2}}{\bullet})$. We next calculate that 
	\begin{align*}
H_{11,1}(z)|T_2|T_{11}\equiv q^{29}\sum_{n=0}^{\infty}b_{19}(22n+2)q^{n}f^{63}_{11}\pmod{2}
\end{align*}
and
\begin{align*}
H_{11,2}(z)|T_2\equiv q^{33}\sum_{n=0}^{\infty}b_{19}(2n)q^{11n}f^{63}_{11}\pmod{2}.
\end{align*}
	By Theorem \ref{Sturm}, the Sturm bound for the space $M_{63}\left(\Gamma_0(2299),\chi_2\right)$ is $13860$. We wish to verify the congruence
	\begin{align*}
	q^{29}\left(\sum_{n=0}^{\infty}b_{19}(22n+2)q^{n}\right)f^{63}_{11}\equiv q^{33}\left(\sum_{n=0}^{\infty}b_{19}(2n)q^{11n}\right)f^{63}_{11}\pmod 2.
	\end{align*} 
Expansion with a calculation package such as $Sage$ confirms that all coefficients up to the desired bound are congruent modulo $2$, and the first part of the theorem is established.\\
	Since only powers for which $11|n-4$ can be non-zero on the right side of the statement, therefore, for all $k\not\equiv 4\pmod{11}$ we obtain:
	\begin{align*}
	b_{19}(22(11n+k)+2)=b_{19}(2\cdot 11^2n+22k+2)\equiv 0\pmod2.
	\end{align*}
Now, recursively applying the relation 
	\begin{align*}
	b_{19}(2n)\equiv b_{19}(2\cdot 11^2 n+90)\pmod 2,
	\end{align*}
	we obtain
	\begin{align*}
	&b_{19}(2\cdot 11^2n+22k+2)\\
	&\equiv b_{19}(2\cdot 11^2(11^2n+11k+1)+90)\pmod 2\\
	&=b_{19}(2\cdot 11^4n+2\cdot11^3k +2\cdot 11^2+90)\\
	&\equiv b_{19}(2\cdot 11^6n+2\cdot11^5k+2\cdot 11^4+2\cdot 11^2\cdot 45 +90)\pmod 2\\
	&\equiv \ldots\\
	&\equiv b_{19}\left(2\cdot11^{2d}n+2\cdot 11^{2d-1}k+2\cdot11^{2d-2}+90\left(\frac{11^{2d}-1}{120}\right)\right)\equiv 0\pmod{2},
	\end{align*}
	for all $d,k\geq1$ with $k\not\equiv4\pmod{11}$. We note that the last line is given by a finite geometric summation. This completes the proof of the theorem.
\end{proof} 
\section{Proof of Theorems \ref{thm5} and \ref{thm6}}
\begin{proof}[Proof of Theorem \ref{thm5}]
	We know that the generating function for the partition function $p(n)$ is given by
	$$\sum_{n=0}^{\infty}p(n)q^n=\frac{1}{(q;q)_{\infty}}.$$
	From \eqref{gen_fun}, we find that
	\begin{align}
	&\sum_{n=0}^{\infty}b_{t}(n)q^n\nonumber \\
	&=\frac{(q^{t};q^{t})_{\infty}}{(q;q)_{\infty}}\nonumber\\
	&=\sum_{n=0}^{\infty}p(n)q^{n}\left(\sum_{n\in \mathbb{Z}}(-1)^{n}q^{t n(3n-1)/2} \right)\nonumber\\
	&=\sum_{n=0}^{\infty}p(n)q^{n}\left(1+\sum_{u=1}^{\infty}(-1)^{u}q^{t u(3u-1)/2}+\sum_{v=1}^{\infty}(-1)^{v}q^{t v(3v+1)/2}\right)\nonumber\\
	&=\sum_{n=0}^{\infty}\left( p(n)+\sum_{u=1}^{\infty}(-1)^{u}p\left(n-\frac{t u(3u-1)}{2} \right) +\sum_{v=1}^{\infty}(-1)^{v}p\left(n-\frac{t v(3v+1)}{2} \right)\right) q^{n}.
	\end{align}
	Thus, for all nonnegative integers $n$, we have
	\begin{align}\label{thm6.1}
	b_{t}(n)=p(n)+\sum_{u=1}^{\infty}(-1)^{u}p\left(n-\frac{t u(3u-1)}{2} \right) +\sum_{v=1}^{\infty}(-1)^{v}p\left(n-\frac{t v(3v+1)}{2} \right).
	\end{align}
	In \eqref{thm6.1}, replacing $n$ with $an+b$ and $t$ with $at$, we obtain
	\begin{align}\label{thm6.2}
	&b_{at}(an+b)\nonumber\\
	&=p(an+b)+\sum_{u=1}^{\infty}(-1)^{u}p\left(a\left(n-\frac{tu(3u-1)}{2}\right)+b \right)\nonumber\\ &\hspace{2cm}+\sum_{v=1}^{\infty}(-1)^{v}p\left(a\left(n-\frac{tv(3v+1)}{2}\right) +b \right).
	\end{align}
	We observe that the terms remaining in the sums in \eqref{thm6.1} satisfy that $n-\frac{t u(3u-1)}{2}$ and $n-\frac{t v(3v+1)}{2}$ are nonnegative. Hence, the same is true in \eqref{thm6.2}. Now, if $p(ad +b )\equiv 0 \pmod m$ for every
	nonnegative integer $d$, then \eqref{thm6.2} yields that $b_{at}(an + b)\equiv 0 \pmod m$. This completes the proof of the theorem.
\end{proof}
\begin{proof}[Proof of Corollary \ref{cor2}]
	The Ramanujan's most celebrated congruences for ordinary partition function are given as follows. For $k\geq1$ and any nonnegative integer $n$, we have
	\begin{align*}
	p\left( 5^{k}n+\delta_{5,k}\right) & \equiv0 \pmod{5^{k}},\\
	p\left(7^{k}n+\delta_{7,k}\right)&\equiv0\pmod{7^{\lfloor k/2\rfloor+1}} ,\\
	p\left(11^{k}n+\delta_{11,k}\right)&\equiv0\pmod{11^{k}}.
	\end{align*}
	Combining the above congruences and Theorem \ref{thm5} we readily obtain that
	$b_{at}(n)$ satisfies the Ramanujan congruences when $a = 5^k, 7^k, 11^k$.	
\end{proof}
\begin{proof}[Proof of Theorem \ref{thm6}]
	We first recall the following 2-dissections:
	\begin{align}\label{dissectionf9}
	\frac{f_9}{f_1}&= \frac{f^{3}_{12}f_{18}}{f^{2}_{2}f_{6}f_{36}}+q\frac{f^{2}_4f_{6}f_{36}}{f^{3}_2f_{12}},\\\label{f3byf1}
	\frac{f_3}{f^{3}_1}&= \frac{f^{6}_{4}f^{3}_{6}}{f^{9}_{2}f^{2}_{12}}+3q\frac{f^{2}_4f_{6}f^{2}_{12}}{f^{7}_2},\\\label{dissectionf1f3}
	\frac{1}{f_1f_3}&= \frac{f^{2}_{8}f^{5}_{12}}{f^{2}_{2}f_{4}f^{4}_{6}f^{2}_{24}}+q\frac{f^{5}_{4}f^{2}_{24}}{f^{4}_{2}f^{2}_{6}f^{2}_8f_{12}}.
	\end{align}
	Identity \eqref{dissectionf9} is Lemma $3.5$ in \cite{xia_yao}, \eqref{f3byf1} is in \cite{naika_gireesh}, and \eqref{dissectionf1f3} is equation (30.12.3) in \cite{hirschhorn}. We next rewrite \eqref{dissectionf9} as 
	\begin{align}\label{thm6.01}
	\frac{f_9}{f_1}= \frac{f^{3}_{12}}{f_{36}}\left(\frac{f_{18}}{f_{2}} \right)\left(\frac{1}{f_{2}f_{6}} \right) +q\frac{f^{2}_4f_{36}}{f_{12}}\left(\frac{f_{6}}{f^{3}_{2}} \right).
	\end{align}
	Magnifying equations \eqref{dissectionf9}-\eqref{dissectionf1f3} by $q\rightarrow q^{2}$, and substituting resultants in \eqref{thm6.01}, and then extracting terms with powers of $q$ congruent to $3$ modulo $4$, we obtain \eqref{b9modidentity}. This completes the proof of the theorem.
\end{proof}	
\section{Proof of Theorem \ref{thm3}}
We will need the following theorem of Garthwaite and Jameson \cite{garthwaite_jameson} for the proof of Theorem \ref{thm3}. Let $B\in \mathbb{Z}$, $k\in \frac{1}{2}\mathbb{Z}$, and $N\in\mathbb{Z}^{+}$. Set
\begin{align*}
\mathcal{S}(B,k,N,\chi):=\left\lbrace\eta^{B}(\tau)F(\tau):F(\tau)\in M^{!}_{k}(\Gamma_{0}(N),\chi) \right\rbrace,
\end{align*}
where $M^{!}_{k}(\Gamma_{0}(N),\chi)$ is the space of weakly modular forms of weight $k$ and level $N$ with character $\chi$.
\begin{theorem}\cite[Theorem 1]{garthwaite_jameson}\label{garthwaite_incong}
Let $\ell$ be prime, let $f(\tau)=q^{B/24}\sum_{n\geq n_0}a(n)q^{n}\in \mathcal{S}(B,k,N,\chi)$ have rational $\ell$-integral coefficients, and let $u\in\mathbb{Z}^{+}$. Let $v_{0}\in \mathbb{Z}$ such that
\begin{align*}
\ell \nmid a(v_{0})\hspace{10pt} and \hspace{10pt}a(n)=0\ \text{for all}\ n<v_0\ \text{with}\ n\equiv v_{0} \pmod{u}.
\end{align*}
Then for all $v\in \left\lbrace0,\ldots,u-1\right\rbrace $ such that
\begin{align*}
v\equiv v_0d^{2}+B\frac{d^{2}-1}{24}\pmod{u}
\end{align*}
for some integer $d$ with $\gcd(d,6Nu)=1$, we have that
\begin{align*}
\sum a(un+v)q^{n}\not\equiv0\pmod{\ell}.
\end{align*}
\end{theorem}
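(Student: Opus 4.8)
The plan is to realize the generating function $f_t/f_1$ as the $q$-expansion of an element of one of the spaces $\mathcal{S}(B,k,N,\chi)$ to which the theorem of Garthwaite and Jameson (Theorem \ref{garthwaite_incong}) applies, and then to invoke that theorem with the trivial base index $v_0=0$. First I would pass from $f_t/f_1$ to an eta-quotient: writing $\eta(\tau)=q^{1/24}f_1$ and $\eta(t\tau)=q^{t/24}f_t$, the generating function \eqref{gen_fun} gives
\[
\frac{\eta(t\tau)}{\eta(\tau)}=q^{(t-1)/24}\,\frac{f_t}{f_1}=q^{(t-1)/24}\sum_{n=0}^{\infty}b_t(n)q^n.
\]
Thus, setting $B:=t-1$ and $a(n):=b_t(n)$, the function $f(\tau):=\eta(t\tau)/\eta(\tau)$ has exactly the shape $q^{B/24}\sum_{n\geq 0}a(n)q^n$ required by Theorem \ref{garthwaite_incong}, with $n_0=0$ and $a(0)=b_t(0)=1$. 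Since the $a(n)$ are nonnegative integers, they are $\ell$-integral for every prime $\ell$.

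Next I would verify that $f$ lies in the appropriate space. Factoring off $\eta^B$, we have $f(\tau)=\eta^{\,t-1}(\tau)F(\tau)$ with $F(\tau)=\eta(t\tau)/\eta(\tau)^{t}$, an eta-quotient with exponents $r_1=-t$, $r_t=1$ and weight $k=\tfrac12(1-t)\in\tfrac12\mathbb{Z}$. Taking the level $N:=24t$ (so that $t\mid N$), the two congruence conditions of Theorem \ref{thm_ono1} hold, namely $\sum_{\delta\mid N}\delta r_\delta=-t+t=0$ and $\sum_{\delta\mid N}(N/\delta)r_\delta=-24t^2+24=24(1-t^2)$, both divisible by $24$. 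Hence $F$ transforms under $\Gamma_0(24t)$ with a Nebentypus character $\chi$ (citing Theorem \ref{thm_ono1} directly when $t$ is odd, and its half-integer weight analogue when $t$ is even), so that $F\in M^{!}_{k}(\Gamma_0(24t),\chi)$ and therefore $f\in\mathcal{S}(t-1,\tfrac{1-t}{2},24t,\chi)$. Because only weak modularity is needed here, no holomorphy at the cusps need be checked, so Theorem \ref{thm_ono2} plays no role.

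Finally I would apply Theorem \ref{garthwaite_incong} with $u=r$ and $v_0=0$. Since $\ell\nmid a(0)=1$ and the support of $f$ contains no indices $n<0$, both hypotheses on $v_0$ hold (the second vacuously). The theorem then yields
\[
\sum_{n}a(rn+v)q^n=\sum_{n}b_t(rn+v)q^n\not\equiv 0\pmod{\ell}
\]
for every $v\in\{0,\dots,r-1\}$ with $v\equiv v_0 d^2+B\tfrac{d^2-1}{24}=(t-1)\tfrac{d^2-1}{24}\pmod r$ for some integer $d$ with $\gcd(d,6Nr)=\gcd(d,144tr)=1$. To match the statement, note that $144=2^4\cdot 3^2$ has the same set of prime divisors as $6$, so the integers $d$ with $\gcd(d,144tr)=1$ are precisely those with $\gcd(d,6tr)=1$; the two conditions on $d$ therefore coincide. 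Renaming $v$ as $s$ completes the proof.

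I anticipate that the only genuine point of care is the second step: confirming the transformation law for $F$ at half-integer weight when $t$ is even and recording the existence of the Nebentypus character $\chi$. The arithmetic of the two divisibility-by-$24$ conditions, the automatic integrality of $(t-1)\tfrac{d^2-1}{24}$ (which follows since $\gcd(d,6)=1$ forces $d^2\equiv 1\pmod{24}$), and the gcd bookkeeping that reconciles $6Nr$ with $6tr$ are all routine.
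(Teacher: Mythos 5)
There is a basic mismatch here: your proposal does not prove the statement in question at all — it \emph{assumes} it. The statement is Theorem~\ref{garthwaite_incong}, which is Theorem~1 of Garthwaite and Jameson, imported into this paper by citation with no proof given. Your argument opens by announcing the plan to ``invoke that theorem with the trivial base index $v_0=0$,'' which is circular as a proof of the theorem itself. What you have actually written is a proof of Theorem~\ref{thm3}, and indeed it coincides with the paper's own proof of Theorem~\ref{thm3} essentially line for line: the paper likewise sets $f(\tau)=q^{(t-1)/24}\sum_{n\ge 0}b_t(n)q^n=\eta^{t-1}(\tau)\,\eta(t\tau)/\eta^{t}(\tau)$, takes $B=t-1$, $k=-\frac{t-1}{2}$, $N=24t$, and applies Theorem~\ref{garthwaite_incong} with $v_0=0$ using $b_t(0)=1$. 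Your additional remarks (the half-integral-weight caveat in Theorem~\ref{thm_ono1} when $t$ is even, the verification of the two divisibility-by-$24$ conditions, and the reconciliation of $\gcd(d,6Nu)=\gcd(d,144tr)=1$ with $\gcd(d,6tr)=1$) are correct and in fact slightly more careful than the paper's two-line argument — but they establish the application, not the cited theorem.

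A genuine proof of Theorem~\ref{garthwaite_incong} would require the machinery of the Garthwaite--Jameson paper itself, which is of a different nature from anything in your writeup. Roughly, one must analyze how a hypothetical congruence $\sum a(un+v)q^n\equiv 0\pmod{\ell}$ interacts with the transformation behavior of $f=\eta^{B}F$: the factor $\eta^{B}$ contributes the multiplier $e^{2\pi i B/24}$ under translation, and the relevant action on the shifted exponents $n+\frac{B}{24}$ is multiplication by $d^2$, which is precisely what produces the orbit $v\mapsto vd^{2}+B\frac{d^{2}-1}{24}\pmod{u}$ appearing in the hypothesis. One then shows such a congruence would propagate along this orbit back to the class of $v_0$, contradicting $\ell\nmid a(v_0)$ together with the vanishing condition on $a(n)$ for $n<v_0$ with $n\equiv v_0\pmod{u}$. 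None of this is present in your proposal, so as a proof of the stated theorem it is vacuous; as a proof of Theorem~\ref{thm3} it is correct and matches the paper's route.
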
	
\begin{proof}[Proof of Theorem \ref{thm3}]
Consider
\begin{equation*}
f(\tau):=q^{\frac{t-1}{24}}\sum_{n=0}^{\infty}b_t(n)q^{n}=\eta^{t-1}(\tau)\frac{\eta(t\tau)}{\eta^{t}(\tau)}.
\end{equation*}
As per the notations in Theorem \ref{garthwaite_incong}, we have $B=t-1$, $k=-\frac{t-1}{2}$, and $N=24t$, i.e., $f(\tau)\in \mathcal{S}(t-1,-\frac{t-1}{2},24t,\chi)$ for some Nebentypus character $\chi$. 
The proof follows applying Theorem \ref{garthwaite_incong} for $v_{0}=0$, since $b_t(0)=1$.
\end{proof}
\section{Proof of Theorems \ref{thm4} and \ref{thm4a}} 
In \cite{ahlgren}, Ahlgren found quantitative estimates for the distribution of parity of the ordinary partition function $p(n)$ in arithmetic progression. We follow a similar approach to prove Theorems \ref{thm4} and \ref{thm4a}.
\begin{proof}[Proof of Theorem \ref{thm4}] 
We recall, Euler's Pentagonal Number Theorem \cite[(1.3.18)]{berndt},
\begin{align}\label{Pentagonal}
f_1=\sum_{n\in \mathbb{Z}}^{}(-1)^{n}q^{\frac{n}{2}(3n-1)}
\end{align}
and equation (2) of \cite{Keith2021}
\begin{align*}
\frac{f^{3}_{3}}{f_1}\equiv \sum_{n\in \mathbb{Z}}^{}q^{n(3n-2)} \pmod{2}.
\end{align*}
Using the above identities in
\begin{align*}
\sum_{n=0}^{\infty}b_6(n)q^{n}\equiv \frac{f^{2}_{3}}{f_{1}}\pmod{2},
\end{align*}
we find that 
\begin{align}\label{mainqnb6}
\sum_{n\in \mathbb{Z}}^{}q^{n(3n-2)}\equiv\sum_{n=0}^{\infty}b_6(n)q^{n}\sum_{n\in \mathbb{Z}}q^{3n(3n-1)/2}\pmod{2}.
\end{align}
Clearly for
\begin{equation*}
\sum_{n=0}^{\infty}a(n)q^{n}:=\sum_{n\in \mathbb{Z}}^{}q^{n(3n-2)},
\end{equation*}
we have
\begin{equation}\label{b6ox}
\#\left\lbrace n\leq X : a(n)\ \text{is odd}\right\rbrace=o(X).
\end{equation}
Set $u_k:=\frac{3}{2}k(3k-1)$, $k\in \mathbb{Z}$. For every nonnegative integer $n$, define the set
\begin{equation*}
\mathcal{M}_{n}:=\left\lbrace n-u_k:0\leq u_k\leq n,\ \text{for some}\ k\in \mathbb{Z} \right\rbrace.
\end{equation*}
Now comparing the coefficients of $q^{n}$ on both sides of \eqref{mainqnb6}, we obtain
\begin{equation}\label{mainproofb6}
a(n)\equiv\sum_{m\in\mathcal{M}_n}b_6(m) \pmod{2}.
\end{equation}
Note that for $k\geq1$, if $u_{-(k-1)}\leq n< u_k$, then $|\mathcal{M}_n|=2k+1$ and if $u_{k}\leq n< u_{-k}$, then $|\mathcal{M}_n|=2k$. 
Thus, $|\mathcal{M}_n|$ is odd if and only if $n$ is in an interval of the form $I_{k}:=\left[u_{-(k-1)},u_{k}\right)$. There exists a positive constant $C$ such that $I_{k}\subset\left[0,X \right] $, $0\leq k\leq C\sqrt{X}$, for large $X$. 
The fact that the length of $I_{k}$ is $\gg k$ implies
\begin{equation*}
\#\left\lbrace n\leq X :n\in I_k\  \text{for some}\  k\right\rbrace \gg \sum_{k=0}^{C\sqrt{X}}k\gg X.
\end{equation*}
Therefore, $\#\left\lbrace n\leq X :|\mathcal{M}_n|\ \text{is odd}\  \right\rbrace \gg X$, and together with \eqref{b6ox} we conclude that 
\begin{equation*}
\#\left\lbrace n\leq X :|\mathcal{M}_n|\ \text{is odd,}\ a(n)\ \text{is even} \right\rbrace \gg X.
\end{equation*}
It is clear from \eqref{mainproofb6} that for every $n\in \left\lbrace n\leq X :n\in I_k\  \text{for some}\  k\right\rbrace$, $b_6(m)$ is even for some $m\in \mathcal{M}_n$. This gives
\begin{equation*}
\#\left\lbrace(n,m): n\leq X,\ m\in \mathcal{M}_n,\ b_6(m)\ \text{is even} \right\rbrace \gg X.
\end{equation*}
We now wish to count $M_{m, X}:=\#\left\lbrace n\leq X: m\in \mathcal{M}_n \right\rbrace$. For fixed $m$, $M_{m, X}$ is not more than $\#\left\lbrace k\in\mathbb{Z}: 0\leq u_k\leq X \right\rbrace$, and this number is clearly $\ll \sqrt{X}$. 
Therefore, $M_{m, X} \ll \sqrt{X}$, and we arrive at \eqref{b6bound1}. This completes the proof for $t=6$. 
\par 
For $t=10$, we employ 
\begin{equation*}
f^3_1=\sum_{n=0}^{\infty}(-1)^{n}(2n+1)q^{\frac{n(n+1)}{2}}
\end{equation*}
and (equation (3) from \cite{Keith2021})
\begin{equation*}
\frac{f^5_5}{f_1}\equiv \sum_{n=1}^{\infty}q^{n^2-1}+\sum_{n=1}^{\infty}q^{2n^2-1}+\sum_{n=1}^{\infty}q^{5n^2-1}+\sum_{n=1}^{\infty}q^{10n^2-1}\pmod{2}
\end{equation*}
in 
\begin{equation*}
\sum_{n=0}^{\infty}b_{10}(n)q^{n}\equiv \frac{f^{2}_{5}}{f_{1}}\pmod{2}
\end{equation*}
to obtain
\begin{align*}
&\sum_{n=1}^{\infty}q^{n^2-1}+\sum_{n=1}^{\infty}q^{2n^2-1}+\sum_{n=1}^{\infty}q^{5n^2-1}+\sum_{n=1}^{\infty}q^{10n^2-1}\\
&\equiv \sum_{n=0}^{\infty}b_{10}(n)q^{n}\sum_{n=0}^{\infty}q^{\frac{5n(n+1)}{2}}\pmod{2}.
\end{align*} 
The rest of the proof goes along similar lines as in the case of $t=6$, so we omit the details for reasons of brevity. This completes the proof of the theorem.
\end{proof}
\begin{proof}[Proof of Theorem \ref{thm4a}] 
In order to prove Theorem \ref{thm4a}, first we recall the following congruence from \cite[(3.4)]{baruahdas}:
\begin{align}\label{b7}
\sum_{n=0}^{\infty}b_7(2n+1)q^{n}\equiv f_1f_{14}\pmod{2}.
\end{align}
We know that the generating function for $14$-regular partitions can be written as
\begin{align}\label{b14}
\sum_{n=0}^{\infty}b_{14}(n)q^n\equiv \frac{f_1f_{14}}{f^2_{1}}\pmod{2}.
\end{align}
Invoking \eqref{b7} in \eqref{b14} yields
\begin{align*}
\sum_{n=0}^{\infty}b_{14}(n)q^{n}\equiv\frac{1}{f^2_1}\sum_{n=0}^{\infty}b_7(2n+1)q^{n}\pmod{2}.
\end{align*}
Taking those terms with even powers of $q$ and then replacing $q^2$ with $q$, we obtain
\begin{align}\label{thm4a.1}
\sum_{n=0}^{\infty}b_{14}(2n)q^{n}\equiv\frac{1}{f_1}\sum_{n=0}^{\infty}b_7(4n+1)q^{n}\pmod{2}.
\end{align}
We then use \eqref{Pentagonal} in \eqref{thm4a.1} to obtain 
\begin{align}\label{b14_main}
\sum_{n=0}^{\infty}b_7(4n+1)q^{n}\equiv \sum_{n=0}^{\infty}b_{14}(2n)q^{n}\sum_{n\in \mathbb{Z}}q^{\frac{n}{2}(3n-1)}\pmod{2}.
\end{align}
We define
\begin{align}\label{b14_an}
\sum_{n=0}^{\infty}a(n)q^n:=\sum_{n=0}^{\infty}b_7(4n+1)q^n
\end{align}
and
\begin{align}\label{b14_2n_define}
\sum_{n=0}^{\infty}c(n)q^n:=\sum_{n=0}^{\infty}b_{14}(2n)q^n.
\end{align}
Note that
\begin{align}\label{b14-ox}
\#\{n\leq X: a(n)\ \text{is odd}\}=o(X),
\end{align}
which follows from \eqref{b7} and the fact that the coefficients of $f_1f_{14}$ have odd density zero, which itself follows from Theorem \ref{Cotron}.
\par Set $u_k:=\frac{1}{2}k(3k-1)$, $k\in \mathbb{Z}$. For every nonnegative integer $n$, we define the set
\begin{align*}
\mathcal{M}_n:=\{n-u_k: 0\leq u_k\leq n,\ \text{for some}\ k\in \mathbb{Z}\}.
\end{align*}
Now comparing the coefficients of $q^{n}$ on both sides of \eqref{b14_main}, we obtain
\begin{align}\label{mainproofb14}
a(n)\equiv \sum_{m\in \mathcal{M}_n}c(m)\pmod{2}.
\end{align}
Note that for $k\geq1$, if $u_{-(k-1)}\leq n< u_k$, then $|\mathcal{M}_n|=2k-1$ and if $u_{k}\leq n< u_{-k}$, then $|\mathcal{M}_n|=2k$. Thus, $|\mathcal{M}_n|$ is odd if and only if $n$ is in an interval of the form $I_{k}:=\left[u_{-(k-1)},u_{k}\right)$. 
There exists a positive constant $C$ such that $I_{k}\subset\left[0,X \right] $, $0\leq k\leq C\sqrt{X}$, for large $X$. The fact that the length of $I_{k}$ is $\gg k$ implies
\begin{align*}
\#\{n\leq X: n\in I_k\ \text{for some}\ k\}\gg \sum_{k=0}^{C\sqrt{X}}k\gg X.
\end{align*}
Therefore, $\#\{n\leq X: |\mathcal{M}_n|\ \text{is odd}\gg X\}$, and together with \eqref{b14-ox} we conclude that 
\begin{align*}
\#\{n\leq X: |\mathcal{M}_n|\ \text{is odd},\ a(n)\ \text{is even}\}\gg X.
\end{align*}
It is clear from \eqref{mainproofb14} that for every $n\in \{ n\leq X :n\in I_k\  \text{for some}\  k\}$, $c(m)$ is even for some $m\in \mathcal{M}_n$. This gives
\begin{align*}
\#\{(n, m): n\leq X, m\in \mathcal{M}_n, \ c(m)\ \text{is even}\}\gg X.
\end{align*}
We now wish to count $N_{m, X}:=\#\{ n\leq X: m\in \mathcal{M}_n \}$. For fixed $m$, $N_{m, X}$ is not more than $\#\left\lbrace k\in\mathbb{Z}: 0\leq u_k\leq X \right\rbrace$, and this number is clearly $\ll \sqrt{3X}$. 
Therefore, $\#\{ n\leq X: m\in \mathcal{M}_n \} \ll \sqrt{3X}$, and we arrive at
\begin{align*}
\#\{m\leq X: c(m)\ \text{is even}\}\gg \sqrt{X/3},
\end{align*}
that is, 
\begin{align*}
\#\{n\leq X: b_{14}(2n)\ \text{is even}\}\gg \sqrt{X/3}.
\end{align*}
This completes the proof of the theorem.
\end{proof}
\section{Proof of  Theorems \ref{thm7} and \ref{thm8}}
\begin{proof}[Proof of Theorem \ref{thm7}]
We first recall the following identity from \cite[(9)]{Keith2021}:
\begin{align}\label{7regular}
\frac{f_7}{f_1}\equiv f^{6}_1+qf^{2}_1f^{4}_7+q^2\frac{f^{8}_7}{f^{2}_1}\pmod{2},
\end{align}
which yields
\begin{align}
\frac{f^{8}_7}{f^{2}_1}&\equiv f^{5}_1f^{7}_7+qf_1f^{11}_7+q^2\frac{f^{15}_7}{f^{3}_1}\pmod{2},\\
\frac{f^{15}_7}{f^{3}_1}&\equiv f^{4}_1f^{14}_7+qf^{18}_7+q^2\frac{f^{22}_7}{f^{4}_1}\pmod{2},\\
\frac{f^{22}_7}{f^{4}_1}&\equiv f^{3}_1f^{21}_7+q\frac{f^{25}_7}{f_1}+q^2\frac{f^{29}_7}{f^{5}_1}\pmod{2},\\
\frac{f^{29}_7}{f^{15}_1}&\equiv f^{2}_1f^{28}_7+q\frac{f^{32}_7}{f^{2}_1}+q^2\frac{f^{36}_7}{f^{6}_1}\pmod{2},\\
\frac{f^{36}_7}{f^{16}_1}&\equiv f_1f^{35}_7+q\frac{f^{39}_7}{f^{3}_1}+q^2\frac{f^{43}_7}{f^{7}_1}\pmod{2},\label{7regularlast}
\end{align}
and for $k\geq 6$
\begin{align}
\frac{f^{7k+1}_7}{f^{k+1}_1}\equiv \frac{f^{7k}_7}{f^{k-6}_1}+q\frac{f^{7k+4}_7}{f^{k-2}_1}+q^2\frac{f^{7k+8}_7}{f^{k+2}_1}\pmod{2}.
\end{align}
Since $\frac{7k}{7}\geq k-6$ and $\frac{7k+4}{7}\geq k-2$ for all $k\geq6$, and the first two terms on the right hand sides of equivalences \eqref{7regular}-\eqref{7regularlast} satisfy the hypothesis of Theorem \ref{Cotron}, 
the first two terms on the right hand side of every equivalence has odd density zero. Therefore, assuming the existence of densities, the left term in each equivalence has the same density as the third term on the right hand side. 
Thus, if the odd density of the eta quotients $\frac{f^{7k+1}_{7}}{f^{k+1}_{1}}$ exists for any $k\geq 0$, then it exists for all $k\geq 0$, and they all are equal to the odd density of $\frac{f_7}{f_1}$ (i.e., when $k=0$), which is $\delta^{(0)}_{7}$.
\end{proof}
\begin{proof}[Proof of Theorem \ref{thm8}]
We first recall the following identity for 13-regular partition from \cite[(3)]{calkin}:
\begin{align}\label{thm8.1}
\frac{f_{13}}{f_1}\equiv f^{3}_4+qf^{5}_2f_{26}+q^6f^{3}_{52}+q^7\frac{f^{7}_{26}}{f_{2}}\pmod{2}.
\end{align}
Using \eqref{thm8.1} and proceeding along similar lines as shown in the proof of Theorem \ref{thm7}, we prove Theorem \ref{thm8}.
\end{proof}
\section{Acknowledgements}
The first and the second author gratefully acknowledge the Department of Science and Technology,
Government of India for the Core Research Grant (Project No. CRG/2021/00314) of SERB.


\begin{thebibliography}{999}

\bibitem{ahlgren}
S. Ahlgren, {\it Distribution of parity of the partition function in arithmetic progressions}, Indag. Mathem. 12 (1999), 173--181.
		
\bibitem{baruahdas}
N. D. Baruah and K. Das, {\it Parity results for 7-regular and 23-regular partitions}, Int. J. Number Theory 11 (2015), 2221--2238.

\bibitem{berndt}
B. C. Berndt, {\it Number theory in the Spirit of Ramanujan}, American Mathematical Society, providence, RI, 2006.

\bibitem{calkin}
N. Calkin, N. Drake, K. James, S. Law, P. Lee, D. Penniston, and J. Radder, {\it Divisibility properties of the $5$-regular and $13$-regular partition functions}, Integers (2008), A60, 10 pp.

\bibitem{Cotron2020}
T. Cotron, A. Michaelsen, E. Stamm and W. Zhu, {\it Lacunary eta-quotients modulo powers of primes}, Ramanujan J. 53 (2020), 269--284.

\bibitem{cui_Gu_2013}
S. -P. Cui and N. S. S. Gu, {\it Arithmetic properties of $\ell$-regular partitions}, Adv. in Appl. Math. 51 (2013), 507--523.

\bibitem{cui_Gu_2015}
S. -P. Cui and N. S. S. Gu, {\it Congruences for $9$-regular partitions modulo $3$}, Ramanujan J. 38 (2015), 503--512.

\bibitem{gordon1997}
B. Gordon and K. Ono, {\it Divisibility of certain partition functions by powers of primes}, Ramanujan J. 1 (1997), 25--34.

\bibitem{garthwaite_jameson}
S. A. Garthwaite and M. Jameson, {\it Incongruences for modular forms and applications to partition functions}, Adv. Math. 376 (2021) 107448.

\bibitem{hirschhorn}
M. D. Hirschhorn, {\it The Power of $q$, a Personal Journey}, Developments in Mathematics, 9 Springer, Cham (2017).

\bibitem{hirschhorn_sellers}
M. D. Hirschhorn and J. A. Sellers, {\it Elementary proofs of parity results for $5$-regular partitions}, Bull. Aust. Math. Soc. 81 (2010), 58--63.


\bibitem{Keith2014}
W. J. Keith, {\it Congruences for 9-regular partitions modulo 3}, Ramanujan J. 35 (2014), 157--164.

\bibitem{Keith2021}
W. J. Keith and F. Zanello, {\it Parity of the coefficients of certain eta-quotients}, Journal of Number Theory 235 (2022), 275--304.

\bibitem{koblitz1993}
N. Koblitz, {\it Introduction to elliptic curves and modular forms}, Springer-Verlag, New York (1991).


\bibitem{ono1996} 
K. Ono, {\it Parity of the partition function in arithmetic progressions}, J. Reine Angew. Math. 472 (1996), 1--15.

\bibitem{ono2004}
K. Ono, {\it The web of modularity: arithmetic of the coefficients of modular forms and $q$-series}, CBMS Regional Conference Series in Mathematics, 102, Amer. Math. Soc., Providence, RI, 2004.

\bibitem{naika_gireesh}
M. S. M. Naika and D. S. Gireesh, {\it Congruences for $3$-regular partitions with designated summands}, Integers 16 (2016) A25, 14 pp.

\bibitem{radu2}
S. Radu, {\it A proof of Subbarao's conjecture}, J. Reine Angew. Math. 672 (2012), 161--175.


\bibitem{SB1}
A. Singh and R. Barman, {\it Divisibility of certain $\ell$-regular partitions by 2}, Ramanujan J. (2022), DOI:10.1007/s11139-022-00580-6.

\bibitem{SB2}
A. Singh and R. Barman, {\it Proofs of some conjectures of Keith and Zanello on $t$-regular partition}, 	arXiv:2201.07046 [math.NT].


\bibitem{Sturm1984} 
J. Sturm, {\it On the congruence of modular forms}, Springer Lect. Notes Math. 1240 (1984), 275--280.


\bibitem{xia_yao}
E. X. W. Xia and O. X. M. Yao, {\it Parity results for 9-regular partitions}, Ramanujan J. 34 (2014), 109--117.


\bibitem{zhao_jin_yao}
T. Y. Zhao, J. Jin, and O. X. M. Yao, {\it Parity results for $11$-, $13$- and $17$-regular partitions}, Coll. Mathematicum 151 (2018), 97--109.

\end{thebibliography}
\end{document}